\newtheorem{Theorem}{Theorem}[section]
\newtheorem{Lemma}[Theorem]{Lemma}
\newtheorem{Definition}[Theorem]{Definition}
\newtheorem{Corollary}[Theorem]{Corollary}
\newtheorem{Proposition}[Theorem]{Proposition}
\newtheorem{Example}[Theorem]{Example}
\newtheorem{Remark}[Theorem]{Remark}
\newtheorem{Conjecture}[Theorem]{Conjecture}
\newtheorem{Question}[Theorem]{Question}
\date{version of \today}
\title [Some conjectures on generalized cluster algebras ]
{Some conjectures on generalized cluster algebras via\\ the cluster formula and $D$-matrix pattern }
\author{Peigen Cao $\;\;\;\;\;\;$ Fang Li $\;\;\;\;\;\;$}
\address{Peigen Cao
\newline Department
of Mathematics, Zhejiang University (Yuquan Campus), Hangzhou, Zhejiang
310027,  P.R.China}
\email{peigencao@126.com}
\address{Fang Li
\newline Department
of Mathematics, Zhejiang University (Yuquan Campus), Hangzhou, Zhejiang
310027, P.R.China}
\email{fangli@zju.edu.cn}
\begin{document}
\begin{CJK*}{GBK}{song}
\renewcommand{\thefootnote}{\alph{footnote}}

\begin{abstract}
In the theory of generalized cluster algebras, we build the so-called cluster formula and $D$-matrix pattern. Then as applications, some fundamental conjectures of generalized cluster algebras are solved affirmatively.

\end{abstract}

\maketitle
\bigskip

\section{introduction}

Cluster algebras were introduced by Fomin and Zelevinsky in \cite{FZ}. The motivation was to create a common framework for phenomena occurring in connection
with total positivity and canonical bases. By now numerous connections between cluster algebras and other branches of mathematics have been discovered, e.g.
the theory of quiver representations, categorifications over some important algebras  and Poisson geometry, etc..

There are many interesting conjectures about cluster algebras, for example, as follows. Note that in this paper, {\em the positive integer $n$ always denotes the rank of a cluster algebra.}

\begin{Conjecture}
\label{conj}
(\cite{FZ2,GSV})(a) The exchange graph of a cluster algebra with rank $n$ only depends on the initial exchange matrix;

(b) Every seed is uniquely determined by its cluster under mutation equivalence;

(c) Two clusters are adjacent in the exchange graph if and only if they
have exactly $n-1$ common cluster variables.

\end{Conjecture}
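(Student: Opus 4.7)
The plan is to mirror the strategy that settles Conjecture \ref{conj} for ordinary cluster algebras, adapted to the generalized setting by means of the cluster formula and $D$-matrix pattern constructed in the earlier sections of the paper.

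First I would use the cluster formula to express every cluster variable $x$ of an arbitrary seed $\Sigma$, relative to a fixed initial seed $\Sigma_0$, as a Laurent polynomial in the initial cluster whose denominator monomial is $\prod_{i=1}^{n} x_{i,0}^{d_i(x)}$ for a $d$-vector $d(x) = (d_1(x), \ldots, d_n(x))$. The $D$-matrix pattern then supplies a tropical mutation rule for these vectors, compatible with the generalized exchange relations. The technical backbone of the whole argument is the \emph{positivity of $d$-vectors}: for every non-initial cluster variable $x$ one has $d_i(x) \geq 0$, with $d_i(x) = 0$ exactly when $x_{i,0}$ lies in the cluster of the seed containing $x$. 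Securing this positivity in the generalized setting is, I expect, the most delicate step.

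For part (b), I would show that the cluster $X = \{x_1, \ldots, x_n\}$ of a seed $\Sigma$ determines its exchange matrix $B$. The strategy is to mutate $\Sigma$ in each direction $k$, read off the $d$-vector of the resulting variable $\mu_k(x_k)$ with respect to $X$, and argue via the $D$-matrix pattern that this vector rigidly encodes the $k$-th column of $B$ (including its signs, via the sign-coherence built into the pattern). For part (c), suppose two clusters $X$ and $X'$ share exactly $n-1$ variables, missing $x_k$ and $x_k'$ respectively. Positivity of $d$-vectors computed relative to $X$ forces the $d$-vector of $x_k'$ to be supported only in the $k$-th coordinate, so the mutation rule identifies $x_k'$ with the variable produced by mutating $X$ in direction $k$; the converse direction of (c) is immediate. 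Part (a) then follows formally from (b) and (c), since the exchange graph depends only on the matrix-mutation class of the initial $B$.

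The main obstacle I anticipate is precisely the positivity of $d$-vectors in the generalized setting. In ordinary cluster algebras the exchange relations are binomial, so the tropical calculus controlling denominator monomials takes a clean max/min form; in the generalized case the exchange polynomial at direction $k$ has higher degree and several intermediate terms, which enriches the tropical formula behind $d$-vector mutation and makes sign control more subtle. Once positivity is in hand, the reductions of (a), (b), (c) to combinatorial statements about the $D$-matrix pattern should proceed along the same template used in the ordinary case (\cite{FZ2,GSV}).
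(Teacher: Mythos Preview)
Your proposal rests on a foundation the paper deliberately avoids: positivity of $d$-vectors (and the companion statement that $d_i(x)=0$ iff $x_{i,0}$ belongs to the cluster of $x$). This is not a ``delicate step'' but an open conjecture in its own right, and nothing in the paper's toolbox establishes it. The paper's actual mechanism for (b) is completely different and does not touch $d$-vector positivity: the cluster formula
\[
H_{t_0}^{t}(X)\,(B_tR^{-1}S^{-1})\,H_{t_0}^{t}(X)^{\top}=B_{t_0}R^{-1}S^{-1},\qquad \det H_{t_0}^{t}(X)=\pm1,
\]
lets one solve explicitly for $B_t$ (and, in the weakly geometric case, for $C_t$) as a function of the Jacobian-type matrix $H_{t_0}^{t}(X)$, which is determined by $X_t$ alone. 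No tropical sign analysis is needed; the exchange matrix is recovered by a direct algebraic inversion. For the general coefficient case, the paper bootstraps via Lemma~\ref{6lem2}, which again uses the cluster formula (integrality of $H$ forces the numerator polynomials to be constants) rather than any positivity statement.

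Two further issues. First, your recovery of the $k$-th column of $B$ from the $d$-vector of $\mu_k(x_k)$ gives at best $[b_{ik}]_+r_k$ and $[-b_{ik}]_+r_k$ mixed together; appealing to ``sign-coherence built into the pattern'' is not a proof, and sign-coherence of $d$-vectors is not established here. Second, (a) does \emph{not} follow formally from (b) and (c): those statements concern a single cluster pattern, whereas (a) compares exchange graphs across \emph{different} coefficient semifields. The paper handles this by proving $\Gamma_M=\Gamma_{M_{pr}}=\Gamma_W$ (Theorem~\ref{6thm1}), identifying every exchange graph with that of the coefficient-free $D$-matrix pattern. For (c) the paper likewise bypasses $d$-vector positivity: it passes to trivial coefficients, writes $H_{t_1}^{t_2}(X^{tr})$ in block form, and uses $\det H=\pm1$ together with a short Laurent-phenomenon argument to force $w_1=\mu_{x_1}(x_1)$.
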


In \cite{GSV}, M. Gekhtman, M. Shapiro and A. Vainshtein proved the following facts in the skew-symmetrizable case for standard cluster algebras:

(1) $(a)$ is true for $B$ with full rank.

(2) $(b)$ implies $(c)$.

(3) $(b)$ is true for cluster algebras of geometric type, and for cluster algebras whose exchange matrix is of full rank.

It is also known that $(b)$ is true for cluster algebras having some ``realization", for example cluster algebras from surfaces \cite{FST} and the cluster algebra which has a categorification \cite{BMRRT,DL,AC}.

We know a cluster should contain the whole information of the corresponding exchange matrix under the assumption that $(b)$ is true.
Trivially,  $(b)$ implies the following statement $(d)$:

$(d)$ The exchange matrix could be uniquely recovered from a given cluster.

 In this paper, our aim is to discuss the above conjectures for generalized cluster algebras.

Generalized cluster algebras were introduced in \cite{CS} by Chekhov and Shapiro,
which are the generalization of the (standard) cluster algebras introduced by Fomin and Zelevinsky
in \cite{FZ}. In the standard case,  a product of cluster variables, one known and one
unknown, is equal to a binomial in other known variables.  These binomial exchange relations is replaced by polynomial exchange relations in generalized cluster algebras. The structure of generalized cluster algebras naturally appears from the Teichmuller spaces of Riemann surfaces with orbifold points \cite{CS}. It also  is raised  in representations of quantum affine algebras \cite{G} and in WKB analysis \cite{IN}.
It can be seen in \cite{CS,NT} that
many important properties and definitions of the standard cluster algebras are naturally
extended to the generalized ones, for examples, Laurent phenomenon, finite type classification, the ${\bf c}$-vectors, ${\bf g}$-vectors and {\bf$F$-polynomials}. From these views, we know generalized cluster algebra is an essential improvement of the standard cluster algebras.

We want to consider  Conjecture \ref{conj}  in the case of generalized cluster algebra. As a tool for studying generalized cluster algebras including cluster algebras, we give the so-called {\bf cluster formula}. Relying the using of the cluster formula, our method will be constructive, in particular, to recover the exchange matrix from a given cluster, as a direct proof of $(d)$. Finally,  we will show that Conjecture  \ref{conj} holds  for generalized cluster algebra in general case.

Cluster algebras is introduced in a general case, which means their coefficients are in a general semifield. But it seems that many researchers are more interested in cluster algebras of geometric type.  Many conjectures were proved for such cluster algebras, for example, Conjecture  \ref{conj} given above. However, these conjectures are also believed true for cluster algebras with general coefficients. In order to consider Conjecture  \ref{conj} in the case of cluster algebras with general coefficients, we introduce the {\bf$D$-matrix pattern}, which explains the connection between any two (generalized) cluster algebras having the same initial exchange matrix with different coefficient rings from the view of exchange graphs. More precisely, we give a positive answer to Conjecture  \ref{conj} for (generalized) cluster algebras with any coefficients, whose restricted results on standard cluster algebras are also an improvement of the conclusions in the case of geometrical type, given by other mathematicians early.

 This paper is organized as follows: in Section 2, some basic definitions are needed. In Section 3, we give the cluster formula, which is a main result in this papper. As an application, we prove $(b)$ in Conjecture \ref{conj} is true for generalized cluster pattern of weak geometric type. In the final part of the Section 3, we give the connection between cluster formula and compatible 2-form. In Section 4, we give a positive answer to to Conjecture \ref{conj} in the case of generalized cluster algebra.

\section{Preliminaries}

We know that $(\mathbb P, \oplus, \cdot)$ is a {\bf semifield } if $(\mathbb P,  \cdot)$ is an abelian multiplicative group endowed with a binary operation of auxiliary addition $\oplus$ which is commutative, associative, and distributive with respect to the multiplication $\cdot$ in $\mathbb P$.

Let $Trop(u_i:i\in I)$ be a free abelian group generated by $\{u_i: i\in I\}$ for a finite set of index $I$. We define the addition $\oplus$ in $Trop(u_i:i\in I)$ by $\prod\limits_{i}u_i^{a_i}\oplus\prod\limits_{i}u_i^{b_i}=\prod\limits_{i}u_i^{min(a_i,b_i)}$, then $(Trop(u_i:i\in I), \oplus)$ is a semifield, which is called a {\bf tropical semifield}.

The multiplicative group of any semifield $\mathbb P$ is torsion-free for multiplication \cite{FZ}, hence its group ring $\mathbb Z\mathbb P$ is a domain.

The following proposition can be checked directly:
\begin{Proposition}
Assume  $\mathbb P_1,\mathbb P_2$ are two semifield, let $\mathbb P=\mathbb P_1\amalg\mathbb P_2=\{(p_1,p_2)|p_1\in\mathbb P_1,p_2\in\mathbb P_2\}$. Then $\mathbb P$ is a semifield via $(p_1,p_2)\cdot(\bar p_1,\bar p_2):=(p_1\cdot\bar p_1,p_2\cdot\bar p_2)$ and $(p_1,p_2)\oplus(\bar p_1,\bar p_2):=(p_1\oplus\bar p_1,p_2\oplus\bar p_2)$.
\end{Proposition}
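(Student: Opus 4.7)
The statement is essentially a direct verification that the semifield axioms are preserved under the componentwise product construction, and the author even remarks that it ``can be checked directly,'' so my plan is to organize the bookkeeping rather than to discover any new idea.

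First I would unwind the definition of a semifield recalled just above the proposition: I need to check that $(\mathbb{P}, \cdot)$ is an abelian multiplicative group, and that $\oplus$ is commutative, associative, and distributive with respect to $\cdot$. I would then verify each of these five properties in turn, in every case reducing to the same property holding separately in $\mathbb{P}_1$ and in $\mathbb{P}_2$, since both operations are defined coordinatewise on the Cartesian product.

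More concretely, for the group structure under $\cdot$, I would note that if $1_i$ denotes the identity of $\mathbb{P}_i$ then $(1_1, 1_2)$ is the identity of $\mathbb{P}$, and that $(p_1, p_2)^{-1} = (p_1^{-1}, p_2^{-1})$ is a well-defined inverse since each $\mathbb{P}_i$ is a group; commutativity and associativity pass through coordinatewise. For $\oplus$, commutativity and associativity of $(p_1, p_2) \oplus (\bar p_1, \bar p_2) = (p_1 \oplus \bar p_1, p_2 \oplus \bar p_2)$ follow immediately from the corresponding identities in each $\mathbb{P}_i$. Finally, distributivity
\[
(p_1,p_2)\cdot\bigl((q_1,q_2)\oplus(r_1,r_2)\bigr) = (p_1,p_2)\cdot(q_1,q_2)\oplus(p_1,p_2)\cdot(r_1,r_2)
\]
reduces coordinatewise to the distributive laws $p_i(q_i\oplus r_i) = p_iq_i \oplus p_ir_i$ holding in $\mathbb{P}_i$ for $i=1,2$.

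There is no real obstacle here; the only thing worth being careful about is not to conflate the symbol $\oplus$ between $\mathbb{P}$ and its factors, and to be explicit that the identity and inverses for $\cdot$ do exist in $\mathbb{P}$ (so that $\mathbb{P}$ is genuinely a group, not merely a monoid). Once those points are flagged, each axiom is a one-line verification and the proof is complete.
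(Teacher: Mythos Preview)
Your proposal is correct and matches the paper's approach exactly: the paper gives no proof at all beyond the remark that the result ``can be checked directly,'' and your coordinatewise verification of the group axioms for $\cdot$ together with commutativity, associativity, and distributivity of $\oplus$ is precisely that direct check.
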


\begin{Definition} (i)~
A square integer matrix $B=(b_{ij})_{n\times n}$ is called {\bf skew-symmetric} if $b_{ij}=-b_{ji}$ for any $i,j=1,\cdots,n$; (ii)~
 In general, $B=(b_{ij})_{n\times n}$ is called {\bf skew-symmetrizable} if there exists a diagonal matrix $T$ with positive integer diagonal entries $s_1,\cdots,s_n$ such that $TB$ is skew-symmetric.

\end{Definition}

We take an ambient field $\mathcal F$  to be the field of rational functions in $n$ independent variables with coefficients in $\mathbb Z\mathbb P$.

\begin{Definition} \cite{FZ, FZ2, NT}~
A {\bf(labeled) seed} $\Sigma$ in $\mathcal F$ is a triplet $(X,Y,B)$ such that

(i)  $X=(x_1,\cdots, x_n)$ is an $n$-tuple with $n$ algebraically independent variables $x_1\cdots,x_n$ over $\mathbb {ZP}$. We call $X$ a {\bf cluster} and $x_1\cdots,x_n$ {\bf cluster variables}.

(ii) $Y=(y_1,\cdots,y_n)$ is an $n$-tuple of elements in $\mathbb P$, where $y_1,\cdots,y_n$ are called {\bf coefficents}.

(iii) $B=(b_{ij})$ is an $n\times n$ integer skew-symmetrizable matrix, called an {\bf exchange matrix}.

\end{Definition}

Let $(R,Z)$ be a pair with  $R=(r_i)_{n\times n}$ a diagonal matrix, $r_i\in \mathds{N}$, and  $Z=(z_{i,m})_{i=1,\cdots,n;~m=1,\cdots,r_i-1}$ a family of elements in $\mathbb P$ satisfying the reciprocity condition
$z_{i,m}=z_{i,r_i-m}$ for $m=1,\cdots,r_i-1$. And, denote the notations $z_{i,0}=z_{i,r_i}=1$ for $i=1,\cdots,n$.

\begin{Definition}(\cite{NT})
Let $\Sigma=(X,Y,B)$ be a seed in $\mathcal F$, we define the {\bf $(R,Z)$-mutation} $\mu_k(\Sigma)=\bar \Sigma=(\bar X, \bar Y, \bar B)$ of $\Sigma$ in the direction $k\in\{1,\cdots,n\}$ as a new seed in $\mathcal F$:
\begin{eqnarray}
\label{eq1}\bar x_i&=&\begin{cases}x_i~,&\text{if } i\neq k\\ x_k^{-1}\left(\prod\limits_{j=1}^nx_j^{[-b_{jk}]_+}\right)^{r_k}(\sum\limits_{m=0}^{r_k}z_{k,m}\hat y_k^m)/(\bigoplus\limits_{m=0}^{r_k}z_{k,m} y_k^m),~& \text{if }i=k.\end{cases}\\
\label{eq2}\bar y_i&=&\begin{cases} y_k^{-1}~,& i=k\\ y_i\left(y_k^{[b_{ki}]_+}\right)^{r_k}\left(\bigoplus\limits_{m=0}^{r_k}z_{k,m} y_k^m\right)^{-b_{ki}}~,&otherwise.
\end{cases}\\
\label{eq3}\bar b_{ij}&=&\begin{cases} -b_{ij}~,& i=k\text{ or } j=k\\b_{ij}+r_k(b_{ik}[-b_{kj}]_++[b_{ik}]_+b_{kj})~,&otherwise.\end{cases}
\end{eqnarray}
for $i,j=1,2,\cdots,n$,
where $[a]_+=max\{a,0\},~\hat y_i=y_i\prod\limits_{j=1}^nx_j^{b_{ji}}$.

\end{Definition}
\begin{Remark}\label{rmkb}
(i). It is easy to check that the $(R,Z)$-mutation $\mu_k$ is an involution.

(ii). If $R=I_n$, then (\ref{eq3}) is the standard matrix mutation. Let $B^{\prime}$ be the matrix obtained from $BR$ by the standard matrix mutation in the direction $k$, it is easy to see $B^{\prime}=\bar BR$. We can write $\mu_k(BR)=\mu_k^g(B)R$, where $\mu_k^g(B)=\bar B$ with $\mu_k^g$ called the {\bf generalized matrix mutation}.
\end{Remark}

\begin{Definition}
(\cite{NT})  An {\bf $(R,Z)$-cluster pattern} (or say, {\bf generalized cluster pattern}) $M$ in $\mathcal F$ is an assignment for each seed  $\Sigma_t$ to a vertex $t$ of the $n$-regular tree $\mathbb T_n$, such that for any edge $t^{~\underline{\quad k \quad}}~ t^{\prime},~\Sigma_{t^{\prime}}=\mu_k(\Sigma_t)$. The triple of $\Sigma_t$ are written as follows:$$X_t=(x_{1;t},\cdots, x_{n;t}),~ Y_t=(y_{1;t},\cdots, y_{n;t}), ~B_t=(b_{ij}^t).$$
\end{Definition}

\begin{Remark}
(i)~ Clearly, for each vertex of $\mathbb T_n$, we can uniquely determine the $(R,Z)$-cluster pattern under $(R,Z)$-mutations.

(ii)~ When $R=I_n$ the identity matrix, $Z$ must be empty. In this case, the generalized cluster pattern is just the standard cluster pattern.

\end{Remark}

\begin{Definition} Let $M$ be an $(R,Z)$-cluster pattern, we denote by $\mathcal X=\{x_{i;t}:t\in \mathbb T_n, 1\leq i\leq n\}$ the set of all cluster variables.  The {\bf generalized cluster algebra} $\mathcal A$ associated with a given $(R,Z)$-cluster pattern is the $\mathbb {ZP}$-subalgebra of the field $\mathcal F$ generated by all cluster variables, i.e. $\mathcal A=\mathbb {ZP}[\mathcal X]$. \end{Definition}

By definition, $\mathcal A$ can be obtained from any given seed $\Sigma_{t_0}$ for $t_0\in\mathbb T_n$ via mutations. So, we denote $\mathcal A=\mathcal A(\Sigma_{t_0})$ and call $\Sigma_{t_0}$ the {\bf initial seed} of $\mathcal A$.

\begin{Definition}
(Restriction)
(i) Let $J$ be a subset of $\langle n\rangle=\{1,2,\cdots,n\}$. Remove from $\mathbb T_n$ all edges labeled by indices in $\langle n\rangle\backslash J$, and denote $\mathbb T_n^{t_0}(J)$ the connected component of the resulting graph containing the vertex $t_0$ in  $\mathbb T_n$. Say that $\mathbb T_n^{t_0}(J)$ is obtained from $\mathbb T_n$ by restriction to $J$ including $t_0$. Trivially, $\mathbb T_n^{t_0}(J)$ is a $|J|$-regular tree.

(ii) Let $M$ be an $(R,Z)$-cluster pattern on $\mathbb T_n$ in $\mathcal F$ with the seed $\Sigma_t=(X_t,Y_t,B_t)$. We define a {\bf restricted generalized cluster pattern} $M^{t_0}(J)$ on $\mathbb T_n^{t_0}(J)$ by assigning the seed $\Sigma_t^{t_0}(J)=(\bar X_{t},\bar Y_{t},\bar B_{t})$ at $t\in\mathbb T_n^{t_0}(J)$ with
$\bar X_{t}=(x_{j;t})_{j\in J}, ~\bar Y_{t}=(y_{j;t}\prod\limits_{i\in \langle n\rangle\backslash J}x_{i;t_0}^{b_{ij}^{t}})_{j\in J}, ~\bar B_{t}=(b_{ij}^{t})_{i,j\in J}$. Actually, $M^{t_0}(J)$ is a $(\bar R,\bar Z)$-cluster pattern on $\mathbb T_n^{t_0}(J)$ in the semifield  $\mathbb P\amalg Trop(x_{i;t_0}:i\in\langle n\rangle\backslash J)$, where $\bar R=diag\{r_j\}_{j\in J},~\bar Z=(z_{j,m})_{j\in J;~m=1,\cdots,r_j-1}$. Say that $M^{t_0}(J)$  is obtained from $M$ by restriction  to $J$ including $t_0$.
\end{Definition}

 Note that in the view in \cite{HL}, we can think the seed $\Sigma_t^{t_0}(J)$ as a mixing-type subseed of $\Sigma_t$, that is, $\Sigma_t^{t_0}(J)=(\Sigma_t)_{\langle n\rangle\backslash J,\emptyset}$.

\begin{Definition}
$(a)$ Assume $\mathbb P_1$ is a semifield, $\mathbb P_2=Trop(u_j:j\in I)$, where $h=|I|<+\infty$.  An $(R,Z)$-cluster pattern $M$ with coefficients in $\mathbb P=\mathbb P_1\coprod \mathbb P_2$ is said to be of {\bf weakly geometric type} if the following hold:

(i) $Z$ is a family of elements in $\mathbb P_1$.

(ii) $y_{i;t}$ is a Laurent monomial and denote it by $y_{i;t}=u_1^{c_{1i}^t}u_2^{c_{2i}^t}\cdots u_h^{c_{hi}^t}$.

$(b)$ Further, if $\mathbb P_1=Trop(Z)$, where we regard  $Z=(z_{i,m})_{i=1,\cdots,n;m=1,\cdots,r_i-1}$ with $z_{i,m}=z_{i,r_i-s}$ as formal variables, then we say $M$ to be an $(R,Z)$-{\bf cluster pattern of geometric type}.
\end{Definition}

\begin{Proposition}
\label{2pro1}
Let $M$ be a  $(R,Z)$-cluster pattern of weakly geometric type,  $Y_t=(y_{1;t},\cdots,y_{n;t})$ be the coefficient tuples at $t$, where $y_{i;t}=u_1^{c_{1i}^t}u_2^{c_{2i}^t}\cdots u_h^{c_{hi}^t}$. Define  $C_t=(c_{ij}^t)$. Then for any edge $t^{~\underline{\quad k \quad}}~ \bar t$ in $\mathbb T_n$, $C_t$ and $C_{\bar t}$ are related by the {\bf formula of mutation of $C$-matrices}:$$\bar c_{ij}^t=\begin{cases}-c_{ij}^t,~&\text{if } j=k;\\ c_{ij}^t+r_k(c_{ik}^t[b_{kj}^t]_++[-c_{ik}^t]_+b_{kj}^t),&otherwise.\end{cases}$$
\end{Proposition}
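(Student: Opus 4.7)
The plan is to substitute the monomial form $y_{i;t}=\prod_l u_l^{c_{li}^t}$ directly into the coefficient mutation formula (\ref{eq2}) and read off the exponent of each $u_i$ in $\bar y_j$. The tropical nature of $\mathbb P_2=Trop(u_l:l\in I)$ will collapse the auxiliary sum to a coordinatewise minimum, and that is essentially the whole argument.

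First I would dispose of the case $j=k$: formula (\ref{eq2}) gives $\bar y_k=y_k^{-1}$, so matching exponents of each $u_i$ on both sides immediately yields $\bar c_{ik}^t=-c_{ik}^t$, which is the first branch of the claim.

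For $j\neq k$, I would exploit that auxiliary addition on the coproduct semifield $\mathbb P_1\coprod\mathbb P_2$ is componentwise, combined with the standing assumptions $z_{k,m}\in\mathbb P_1$ and $y_k\in\mathbb P_2$. The $\mathbb P_2$-component of $\bigoplus_{m=0}^{r_k}z_{k,m}y_k^m$ then equals $\bigoplus_{m=0}^{r_k}y_k^m$ computed in the tropical semifield, namely $\prod_l u_l^{\min_{0\le m\le r_k} m c_{lk}^t}$. Since $m$ ranges over $\{0,1,\dots,r_k\}$, this minimum is $-r_k[-c_{lk}^t]_+$ (it is $0$ when $c_{lk}^t\ge 0$ and $r_kc_{lk}^t$ when $c_{lk}^t<0$). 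Substituting into (\ref{eq2}) and collecting the exponent of $u_i$ in the product $y_j\cdot y_k^{r_k[b_{kj}^t]_+}\cdot(\bigoplus_{m=0}^{r_k} z_{k,m}y_k^m)^{-b_{kj}^t}$ should give
$$\bar c_{ij}^t=c_{ij}^t+r_k c_{ik}^t[b_{kj}^t]_+ +r_k[-c_{ik}^t]_+ b_{kj}^t,$$
which is exactly the second branch.

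The only consistency check worth making along the way is that $\bar y_j$ remains a Laurent monomial purely in the $u_l$, so that $C_{\bar t}$ is well-defined; this is built into the hypothesis that the pattern is of weakly geometric type, imposed at every vertex of $\mathbb T_n$. I do not anticipate any serious obstacle: the whole proof is tropical bookkeeping, and the only thing to keep straight is the sign convention in the $[\,\cdot\,]_+$ notation when turning $-\min(0,r_kc_{lk}^t)$ into $r_k[-c_{lk}^t]_+$.
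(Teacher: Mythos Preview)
Your proposal is correct and follows essentially the same approach as the paper: substitute the monomial expression for $y_{i;t}$ into the coefficient mutation formula (\ref{eq2}) and read off the exponent of each $u_i$. The only cosmetic difference is that the paper records, as a side observation, that the weakly-geometric hypothesis forces $\bigoplus_{m=0}^{r_k} z_{k,m}=1$ in $\mathbb P_1$, whereas you handle the $z_{k,m}$ by projecting to the $\mathbb P_2$-component via the coproduct structure; both routes lead to the same tropical minimum $\min_{0\le m\le r_k} m\,c_{ik}^t=-r_k[-c_{ik}^t]_+$ and hence the same formula.
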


\begin{proof}
 By (\ref{eq2}), we have $$u_1^{\bar c_{1i}^t}\cdots u_h^{\bar c_{hi}^t}=\begin{cases} (u_1^{c_{1k}^t}u_2^{c_{2k}^t}\cdots u_h^{c_{hk}^t})^{-1}~,& \text{if}~ i=k;\\ u_1^{c_{1i}^t}\cdots u_h^{c_{hi}^t}\left((u_1^{c_{1k}^t}\cdots u_h^{c_{hk}^t})^{[b_{ki}]_+}\right)^{r_k}\left(\bigoplus\limits_{m=0}^{r_k}z_{k,m} (u_1^{c_{1k}^t}\cdots u_h^{c_{hk}^t})^m\right)^{-b_{ki}}~,&\text{otherwise}.
\end{cases}$$
Then, we obtain $\bigoplus\limits_{m=0}^{r_k} z_{k,m}=1$ for $k=1,\cdots,n$ and
$$\bar c_{ij}^t=\begin{cases}-c_{ij}^t,&\text{if } j=k;\\ c_{ij}^t+r_k(c_{ik}^t[b_{kj}^t]_++[-c_{ik}^t]_+b_{kj}^t),&otherwise.\end{cases}$$
\end{proof}

\begin{Definition}
 We say $M$ to be an {\bf $(R,Z)$-cluster pattern with (weakly) principle coefficients} at $t_0$, if
$M$ is of (weakly) geometric type on $\mathbb T_n$ and $C_{t_0}=I_n$.
\end{Definition}
\begin{Remark}
The definition of $(R,Z)$-cluster pattern with  principle coefficients  given here is the same with the one in \cite{NT}.
\end{Remark}

\section{Cluster formula and related results}

\subsection{The cluster formula}.

 A fundamental fact is that for any vertex $t\in \mathbb T_n$ and the corresponding seed $\Sigma_t=(X_t,Y_t,B_t)$, $RB_t$ is always a skew-symmetrizable matrix, that is, there is a positive integer diagonal matrix $S$ such that $SRB_t$ is skew-symmetric.

 Indeed, since $B_t$ is  skew-symmetrizable, we have a positive integer diagonal matrix $T$ which does not depend on $t$ such that $TB_t$ is a skew-symmetric matrix.
For $R=(r_i)$, let $S=(\prod\limits_{i=1}^nr_i)TR^{-1}$. Trivially, $S$ is a positive integer diagonal matrix. Then $SRB_t=(\prod\limits_{i=1}^nr_i)TR^{-1}RB_t=(\prod\limits_{i=1}^nr_i)TB_t$ is skew-symmetric.

 The diagonal matrix $S=(\prod\limits_{i=1}^nr_i)TR^{-1}$ will be valuable for the following discussion, which is called the {\bf $R$-skew-balance} for all seeds $\Sigma_t$ with $t\in \mathbb T_n$.

  Let $\Sigma_t=( X_t,  Y_t, B_t),~\Sigma_{t_0}=( X_{t_0},  Y_{t_0}, B_{t_0})$ be two seeds of $M$ at $t$ and $t_0$. Considering $\Sigma_{t_0}$ as the initial seed, we know $x_{i;t}$ is a rational function in $x_{1;t_0},\cdots,x_{n;t_0}$ with coefficients in $\mathbb Z\mathbb P$ for each $i$.

Let $$J^t_{t_0}(X)=\begin{pmatrix} \frac{\partial x_{1;t}}{\partial x_{1;t_0}}&\frac{\partial x_{2;t}}{\partial x_{1;t_0}}&\cdots &\frac{\partial x_{n;t}}{\partial x_{1;t_0}}\\ \frac{\partial x_{1;t}}{\partial x_{2;t_0}}&\frac{\partial x_{2;t}}{\partial x_{2;t_0}}&\cdots &\frac{\partial x_{n;t}}{\partial x_{2;t_0}}\\ \vdots &\vdots& &\vdots\\ \frac{\partial x_{1;t}}{\partial x_{n;t_0}}&\frac{\partial x_{2;t}}{\partial x_{n;t_0}}&\cdots &\frac{\partial x_{n;t}}{\partial x_{n;t_0}} \end{pmatrix},\;\; H^t_{t_0}(X)=  \begin{pmatrix} \frac{x_{1;t_0}}{x_{1;t}}\cdot\frac{\partial x_{1;t}}{\partial x_{1;t_0}}&\frac{x_{1;t_0}}{x_{2;t}}\cdot\frac{\partial x_{2;t}}{\partial x_{1;t_0}}&\cdots &\frac{x_{1;t_0}}{x_{n;t}}\cdot\frac{\partial x_{n;t}}{\partial x_{1;t_0}}\\ \frac{x_{2;t_0}}{x_{1;t}}\cdot\frac{\partial x_{1;t}}{\partial x_{2;t_0}}&\frac{x_{2;t_0}}{x_{2;t}}\cdot\frac{\partial x_{2;t}}{\partial x_{2;t_0}}&\cdots &\frac{x_{2;t_0}}{x_{n;t}}\cdot\frac{\partial x_{n;t}}{\partial x_{2;t_0}}\\ \vdots &\vdots& &\vdots\\ \frac{x_{n;t_0}}{x_{1;t}}\cdot\frac{\partial x_{1;t}}{\partial x_{n;t_0}}&\frac{x_{n;t_0}}{x_{2;t}}\cdot\frac{\partial x_{2;t}}{\partial x_{n;t_0}}&\cdots &\frac{x_{n;t_0}}{x_{n;t}}\cdot\frac{\partial x_{n;t}}{\partial x_{n;t_0}} \end{pmatrix},$$ we can obtain
 $H^t_{t_0}(X)=diag(x_{1;t_0},\cdots, x_{n;t_0})J^t_{t_0}(X)diag(\frac{1}{x_{1;t}}, \cdots, \frac{1}{x_{n;t}})$.

\begin{Lemma}
\label{3lem2}$H_{u}^{v}(X)H_{v}^{w}(X)=H_{u}^{w}(X)$ for any $u,v,w\in \mathbb T_n$. In particular, $H_{u}^{v}(X)^{-1}=H_{v}^{u}(X)$.
\end{Lemma}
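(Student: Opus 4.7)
The key observation is that $H_u^v(X)$ is nothing but the logarithmic Jacobian: its $(i,j)$-entry can be written as
\[
H_u^v(X)_{ij} \;=\; \frac{x_{i;u}}{x_{j;v}}\cdot\frac{\partial x_{j;v}}{\partial x_{i;u}} \;=\; \frac{\partial \log x_{j;v}}{\partial \log x_{i;u}}.
\]
Hence the claimed identity $H_u^v(X)H_v^w(X)=H_u^w(X)$ should be a direct incarnation of the multivariable chain rule in logarithmic coordinates. My plan is to reduce the statement to this chain rule.

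First, I would justify that for any two vertices $u,v\in\mathbb T_n$ the cluster variables $x_{j;v}$ can be viewed as rational functions in $x_{1;u},\dots,x_{n;u}$ with coefficients in $\mathbb Z\mathbb P$. This is built into the definition of the generalized cluster pattern: iterating the mutation formula \eqref{eq1} expresses each $x_{j;v}$ as an element of the ambient field $\mathcal F$ regarded as rational functions of any chosen initial cluster. Thus the partial derivatives $\partial x_{j;v}/\partial x_{i;u}$ are well-defined elements of $\mathcal F$.

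Next, given three vertices $u,v,w$, I would write $x_{k;w}$ as a rational function of the intermediate cluster $X_v$, which in turn is a rational function of $X_u$. Applying the ordinary chain rule,
\[
\frac{\partial x_{k;w}}{\partial x_{i;u}} \;=\; \sum_{j=1}^{n}\frac{\partial x_{k;w}}{\partial x_{j;v}}\cdot\frac{\partial x_{j;v}}{\partial x_{i;u}}.
\]
Multiplying through by $x_{i;u}/x_{k;w}$ and inserting the factor $x_{j;v}/x_{j;v}=1$ inside each summand gives
\[
\frac{x_{i;u}}{x_{k;w}}\cdot\frac{\partial x_{k;w}}{\partial x_{i;u}} \;=\; \sum_{j=1}^{n}\left(\frac{x_{i;u}}{x_{j;v}}\cdot\frac{\partial x_{j;v}}{\partial x_{i;u}}\right)\left(\frac{x_{j;v}}{x_{k;w}}\cdot\frac{\partial x_{k;w}}{\partial x_{j;v}}\right),
\]
which is precisely the $(i,k)$-entry identity $H_u^w(X)_{ik}=\sum_j H_u^v(X)_{ij}\,H_v^w(X)_{jk}$, establishing the first assertion. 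For the second assertion, I would simply specialize $w=u$: since $\partial x_{j;u}/\partial x_{i;u}=\delta_{ij}$, one has $H_u^u(X)=I_n$, so $H_u^v(X)H_v^u(X)=I_n$ and $H_u^v(X)^{-1}=H_v^u(X)$ follows.

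I do not foresee a genuine obstacle here; the only point that deserves a line of justification is the legitimacy of the chain rule in this setting, which rests on the rational-function interpretation of cluster variables. Everything else is formal manipulation of logarithmic derivatives.
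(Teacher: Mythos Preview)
Your argument is correct and is essentially the same as the paper's: the paper applies the ordinary chain rule to obtain $J_u^w(X)=J_u^v(X)J_v^w(X)$ and then uses the factorization $H_u^v(X)=\mathrm{diag}(x_{i;u})\,J_u^v(X)\,\mathrm{diag}(1/x_{j;v})$ so that the inner diagonal factors cancel, which is exactly your entrywise insertion of $x_{j;v}/x_{j;v}$. The specialization $w=u$ to get $H_u^v(X)^{-1}=H_v^u(X)$ is identical.
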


\begin{proof}
We can view $x_{j;w}$ as a rational function in $x_{1;v},\cdots,x_{n;v}$, and view $x_{k;v}$ as a rational function in $x_{1;u},\cdots,x_{n;u}$, where $j,k=1,\cdots,n$.
Thus $\frac{\partial x_{j;w}}{\partial x_{i;u}}=\sum\limits_{k=1}^n \frac{\partial x_{j;w}}{\partial x_{k;v}}\cdot \frac{\partial x_{k;v}}{\partial x_{i;u}}=\sum\limits_{k=1}^n \frac{\partial x_{k;v}}{\partial x_{i;u}}\cdot \frac{\partial x_{j;w}}{\partial x_{k;v}}$,
 i.e. $J_{u}^{w}(X)=J_{u}^{v}(X)J_{v}^{w}(X)$. It follows that
 \begin{eqnarray}
&&H_{u}^{v}(X)H_{v}^{w}(X)\nonumber\\
 &=&diag\{x_{1;u},\cdots,x_{n;u}\}J_{u}^{v}(X)diag\{\frac{1}{x_{1;v}},\cdots,\frac{1}{x_{n;v}}\}
\cdot diag\{x_{1;v},\cdots,x_{n;v}\}J_{v}^{w}(X)diag\{\frac{1}{x_{w;1}},\cdots,\frac{1}{x_{w;n}}\}\nonumber\\
 &=&diag\{x_{1;u},\cdots,x_{n;u}\}J_{u}^{v}(X)J_{v}^{w}(X)diag\{\frac{1}{x_{1;w}},\cdots,\frac{1}{x_{n;w}}\}\nonumber\\
 &=&diag\{x_{1;u},\cdots,x_{n;u}\}J_{u}^{w}(X)diag\{\frac{1}{x_{1;w}},\cdots,\frac{1}{x_{n;w}}\}\nonumber\\
 &=&H_{u}^{w}(X).\nonumber
 \end{eqnarray}
\end{proof}

\begin{Lemma}
\label{3lem1}
If $\Sigma_v=\mu_k(\Sigma_u)$ for $k\in\langle n\rangle$, then
$H_u^v(X)(B_vR^{-1}S^{-1})H_u^v(X)^\top=B_uR^{-1}S^{-1}$, and $det(H_u^v(X))=-1$.
\end{Lemma}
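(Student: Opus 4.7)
The plan is to compute $H:=H_u^v(X)$ directly from the mutation formula (\ref{eq1}) and verify both conclusions by inspection, exploiting the fact that the $(i,j)$-entry of $H$ is the logarithmic derivative $\partial\log x_{j;v}/\partial\log x_{i;u}$. Since $\mu_k$ changes only $x_{k;u}$, one has $x_{j;v}=x_{j;u}$ for $j\neq k$, so the $j$-th column of $H$ is the standard basis vector $e_j$ whenever $j\neq k$. For the $k$-th column, differentiating
\begin{equation*}
\log x_{k;v}=-\log x_{k;u}+r_k\sum_j [-b^u_{jk}]_+\log x_{j;u}+\log P-\log Q
\end{equation*}
(with $P=\sum_{m=0}^{r_k} z_{k,m}\hat y_k^m$ and $Q=\bigoplus_{m=0}^{r_k} z_{k,m}y_k^m\in\mathbb P$), and noting that $Q$ has no $x$-dependence while $\hat y_k$ is independent of $x_{k;u}$ (because $b^u_{kk}=0$), yields
\begin{equation*}
H_{kk}=-1,\qquad H_{ik}=r_k[-b^u_{ik}]_+ + b^u_{ik}\,\lambda_k\quad(i\neq k),
\end{equation*}
with $\lambda_k:=\frac{\sum_{m=0}^{r_k} m\,z_{k,m}\hat y_k^m}{\sum_{m=0}^{r_k} z_{k,m}\hat y_k^m}$. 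The determinant claim follows at once: since columns $j\neq k$ of $H$ equal $e_j$, row $k$ has entry $-1$ in column $k$ and zero elsewhere, so expanding along row $k$ gives $\det(H)=-1$.

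For the quadratic identity, set $M:=B_uR^{-1}S^{-1}$ and $M^*:=B_vR^{-1}S^{-1}$. Because $R^{-1}S^{-1}=(\prod_j r_j)^{-1}T^{-1}$ with $T$ the skew-symmetrizer common to all $B_t$, both $M$ and $M^*$ are skew-symmetric. Expanding $(HM^*H^\top)_{pq}$ via the shape of $H$ splits into two cases. If $p=k$ or $q=k$, the claim reduces to $b^v_{kj}=-b^u_{kj}$, which is part of (\ref{eq3}). If $p,q\neq k$, the expansion collapses to
\begin{equation*}
(HM^*H^\top)_{pq}=M^*_{pq}+H_{pk}M^*_{kq}+M^*_{pk}H_{qk},
\end{equation*}
since $M^*_{kk}=0$.

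Substituting the mutation rule for $b^v_{pq}$ from (\ref{eq3}) and the explicit formulas for $H_{pk},H_{qk}$ produces terms linear in $\lambda_k$ alongside tropical plus-sign terms. I expect the $\lambda_k$-contributions to cancel: they arise as $b^u_{pk}\lambda_k\,M^*_{kq}$ from the second summand and as $M^*_{pk}\,b^u_{qk}\lambda_k$ from the third, and the cancellation is forced by the skew-symmetrizability relation $t_k b^u_{qk}=-t_q b^u_{kq}$ together with the ratio identity $(r_qs_q)/(r_ks_k)=t_q/t_k$ (a consequence of $r_is_i=(\prod_j r_j)t_i$). What then remains is the purely algebraic identity
\begin{equation*}
b^u_{pk}[-b^u_{kq}]_+ + [b^u_{pk}]_+\,b^u_{kq}=[-b^u_{pk}]_+\,b^u_{kq} + b^u_{pk}\,[b^u_{kq}]_+,
\end{equation*}
which is an instance of $[x]_+-[-x]_+=x$. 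The main obstacle I anticipate is bookkeeping rather than conceptual: carefully tracking the diagonal factors $(r_is_i)^{-1}$ from $M^*$ against the factor $t_k/t_q$ produced when rewriting $H_{qk}$ via skew-symmetrizability, and verifying that the $\lambda_k$-terms indeed annihilate.
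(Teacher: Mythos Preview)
Your proposal is correct and follows essentially the same approach as the paper: both compute $H$ explicitly from the mutation formula (obtaining identity columns for $j\neq k$ and the $\lambda_k$-dependent $k$-th column), read off $\det H=-1$, and then verify the quadratic identity by direct expansion, using the skew-symmetrizer relation $s_kr_kb^u_{kq}=-s_qr_qb^u_{qk}$ to eliminate the $\lambda_k$-terms. The paper organizes the last step as a block-matrix product after normalizing to $k=1$, whereas you work entrywise and isolate the $\lambda_k$-cancellation and the residual identity $[x]_+-[-x]_+=x$ explicitly, but the underlying computation is the same.
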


\begin{proof}
We assume that $\Sigma_u=\Sigma,\Sigma_v=\bar \Sigma,~x_{i;u}=x_i,~x_{i;v}=\bar x_i$ for $i\in\langle n\rangle$ and we denote $H=H_u^v(X)$.
We have
\begin{eqnarray}
\bar x_i&=&\begin{cases}x_i~,&\text{if } i\neq k,\\ x_k^{-1}(\prod\limits_{j=1}^nx_j^{[-b_{jk}]_+})^{r_k}\frac{\sum\limits_{m=0}^{r_k}z_{k,m}\hat y_k^m}{\bigoplus\limits_{m=0}^{r_k}z_{k,m} y_k^m}~,& \text{if }i=k.\end{cases}\nonumber
\end{eqnarray}
For $l=k,~i\neq k$, we have
\begin{eqnarray}
\frac{\partial \bar x_k}{\partial x_i}&=&\frac{x_k^{-1}}{\bigoplus\limits_{m=0}^{r_k}z_{k,m} y_k^m}[\frac{\partial(\prod\limits_{j=1}^nx_j^{[-b_{jk}]_+})^{r_k}}{\partial x_i}\sum\limits_{m=0}^{r_k}z_{k,m}\hat y_k^m+(\prod\limits_{j=1}^nx_j^{[-b_{jk}]_+})^{r_k}\frac{\partial(\sum\limits_{m=0}^{r_k}z_{k,m}\hat y_k^m)}{\partial x_i}]\nonumber\\
&=&r_k[-b_{ik}]_+x_i^{-1}\bar x_k+\frac{x_i^{-1}x_k^{-1}}{\bigoplus\limits_{m=0}^{r_k}z_{k,m} y_k^m}(\prod\limits_{j=1}^nx_j^{[-b_{jk}]_+})^{r_k}\sum\limits_{m=0}^{r_k}mz_{k,m}\hat y_k^mb_{ik}   \nonumber\\
\text{ thus}\hspace{10mm}\frac{x_i}{\bar x_k}\frac{\partial \bar x_k}{\partial x_i}&=&[-b_{ik}]_+r_k+\frac{\sum\limits_{m=0}^{r_k}mz_{k,m}\hat y_k^mb_{ik}}{\sum\limits_{m=0}^{r_k}z_{k,m}\hat y_k^m}.  \nonumber
\end{eqnarray}
We have
\begin{eqnarray}
\label{eq4} H_{il}&=&\frac{x_i}{\bar x_l}\frac{\partial \bar x_l}{\partial x_i} =\begin{cases}\delta_{il}~,& \text{if } l\neq k,\\-1 ~,& \text {if }i=l=k,\\ [-b_{ik}]_+r_k+\frac{\sum\limits_{m=0}^{r_k}mz_{k,m}\hat y_k^mb_{ik}}{\sum\limits_{m=0}^{r_k}z_{k,m}\hat y_k^m}~,&\text {if }i\neq k,~l=k.\end{cases}
\end{eqnarray}

It is easy to see that $det(H)=-1$.
Without lose of generality, we may assume $k=1$.
Let \\$\beta=\begin{pmatrix}\bar b_{21}\\ \bar b_{31}\\ \vdots\\ \bar b_{n1}\end{pmatrix},~ \alpha=\begin{pmatrix}a_2\\a_3\\ \vdots\\a_n\end{pmatrix},$ where $a_i=[-b_{i1}]_+r_1+\frac{\sum\limits_{m=0}^{r_1}mz_{1,m}\hat y_1^mb_{i1}}{\sum\limits_{m=0}^{r_1}z_{1,m}\hat y_1^m}$. Then $H=\begin{pmatrix}-1&0\\ \alpha&I_{n-1}\end{pmatrix}$.

 Let $R=\begin{pmatrix}r_1&0\\ 0&R_1\end{pmatrix}, S=\begin{pmatrix}s_1&0\\ 0&S_1\end{pmatrix},$ where $R_1=diag\{r_2,r_3,\cdots,r_n\}, ~ S_1=diag\{s_2,s_3,\cdots,s_n\}$.  We can write $B_v=\begin{pmatrix}0&\gamma^{\top}\\ \beta&B_1\end{pmatrix}$, where $\gamma^{\top}=-s_1^{-1}r_1^{-1}\beta^{\top}R_1S_1$, due to the fact that $SRB_v$ is skew-symmetric.
So, for $B=B_u$, $\bar B=B_v$,  we need only to show that $B_u=HB_vR^{-1}S^{-1}H^{\top}SR$.

Denote $M=HB_vR^{-1}S^{-1}H^{\top}SR$. Then, $M=$\\
$\begin{pmatrix}-1&0\\ \alpha&I_{n-1}\end{pmatrix}\begin{pmatrix}0&-s_1^{-1}r_1^{-1}\beta^{\top}R_1S_1\\
\beta&B_1\end{pmatrix}\begin{pmatrix}r_1^{-1}&0\\ 0&R_1^{-1}\end{pmatrix}\begin{pmatrix}s_1^{-1}&0\\0&S_1^{-1}\end{pmatrix}
\begin{pmatrix}-1&0\\ \alpha&I_{n-1}\end{pmatrix}^{\top}\begin{pmatrix}s_1&0\\ 0&S_1\end{pmatrix}\begin{pmatrix}r_1&0\\ 0&R_1\end{pmatrix}$\\
$=\begin{pmatrix}0&s_1^{-1}r_1^{-1}\beta^{\top}R_1S_1\\ -\beta  &B_1+s_1^{-1}r_1^{-1}\beta \alpha^{\top}S_1R_1-s_1^{-1}r_1^{-1}\alpha\beta^{\top}S_1R_1\end{pmatrix}$.

Thus, we can obtain by replacing the entries that
 \begin{eqnarray}
M_{il}&=&\begin{cases}-\bar b_{il}~,&\text{if } i=1\text{ or }l=1,\\  \bar b_{il}+r_1^{-1}s_1^{-1}\bar b_{i1}\left([-b_{l1}]_+r_1+\frac{\sum\limits_{m=0}^{r_1}mz_{1,m}\hat y_1^mb_{l1}}{\sum\limits_{m=0}^{r_1}z_{1,m}\hat y_1^m}\right)s_lr_l-\\ \hspace{4mm}r_1^{-1}s_1^{-1}\left([-b_{i1}]_+r_1+\frac{\sum\limits_{m=0}^{r_1}mz_{1,m}\hat y_1^mb_{i1}}{\sum\limits_{m=0}^{r_1}z_{1,m}\hat y_1^m}\right)\bar b_{l1}s_lr_l
~,&\text{otherwise}.
  \end{cases}\nonumber \\
  &=&\begin{cases}-\bar b_{il}~,&\text{if } i=1\text{ or }l=1,\\ \bar b_{il}+s_1^{-1}s_lr_l\bar b_{i1}[-b_{l1}]_+-s_1^{-1}s_lr_l[-b_{i1}]_+\bar b_{l1}~,
&\text{otherwise}.
  \end{cases}\nonumber
\end{eqnarray}

By (\ref{eq3}), we have $\bar b_{i1}=-b_{i1},~\bar b_{l1}=-b_{l1}$. Because $SRB$ and $SR\bar B$ are skew-symmetric, we have $s_1r_1b_{1l}=-b_{l1}s_lr_l, ~s_1r_1\bar b_{1l}=-\bar b_{l1}s_lr_l$ , thus  $s_1r_1[b_{1l}]_+=[-b_{l1}]_+s_lr_l$,  and
 $$M_{il}=\begin{cases}-\bar b_{il}~,&\text{if } i=1\text{ or }l=1,\\ \bar b_{il}+ r_1\left(\bar b_{i1}[-\bar b_{1l}]_++[\bar b_{i1}]_+\bar b_{1l}\right)~,&\text{otherwise}.
  \end{cases}$$
So, by mutation of $\bar B$, we have $M=\mu_1(\bar B)$, but $\mu_1(\bar B)=B$, so we get $M=B$.
\end{proof}

Let $\epsilon\in\{+,-\},~E_k^{\epsilon}=(e_{il})_{n\times n},~e_{il}=\begin{cases} \delta_{il}~,&  \text{if } l\neq k,\\ -1~, & \text{if }i=l=k, \\ [\epsilon b_{ik}]_{+}r_k~,&\text{otherwise}.\end{cases}$ We know $(E_k^{\epsilon})^2=I_n.$

\begin{Corollary}
\label{3cor1}Keep the above notations. For a seed $\Sigma$ of $M$, if $\bar \Sigma=\mu_k(\Sigma)$, then $E_k^{\epsilon}\bar BR^{-1}S^{-1}{E_k^{\epsilon}}^\top=BR^{-1}S^{-1}.$

\begin{proof}
Let $I_k^{+}=\{i~|b_{ik}\geq 0\}$, and $H_0=H|_{x_i=t,~\forall i\in I_k^+ }$. We know
\begin{eqnarray}
\lim\limits_{t\rightarrow +\infty} \left(([-b_{ik}]_+r_k+\frac{\sum\limits_{m=0}^{r_k}mz_{k,m}\hat y_k^mb_{ik}}{\sum\limits_{m=0}^{r_k}z_{k,m}\hat y_k^m})|_{x_i=t,~\forall i\in I_k^+}\right)&=&[b_{ik}]_+r_k\nonumber\\
\lim\limits_{t\rightarrow +0} \left(([-b_{ik}]_+r_k+\frac{\sum\limits_{m=0}^{r_k}mz_{k,m}\hat y_k^mb_{ik}}{\sum\limits_{m=0}^{r_k}z_{k,m}\hat y_k^m})|_{x_i=t,~\forall i\in I_k^+}\right)&=&[-b_{ik}]_+r_k.\nonumber
\end{eqnarray}

Thus $\lim\limits_{t\rightarrow +\infty} H_0=E_k^+,~\lim\limits_{t\rightarrow 0} H_0=E_k^-$.
By Lemma \ref{3lem1},  we have $E_k^{\epsilon}\bar BR^{-1}S^{-1}{E_k^{\epsilon}}^\top=BR^{-1}S^{-1}.$

\end{proof}

\end{Corollary}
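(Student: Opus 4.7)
The plan is to recover the claimed identity (for both $\epsilon = +$ and $\epsilon = -$) from the single identity furnished by Lemma \ref{3lem1}, by exhibiting $E_k^+$ and $E_k^-$ as appropriate specialization--limits of the matrix $H = H_u^v(X)$ when $\Sigma_v = \mu_k(\Sigma_u)$. Inspecting formula (\ref{eq4}) for the entries of $H$, one sees that $H$ already coincides with $E_k^\epsilon$ in every column other than the $k$-th (in particular $H_{kk} = -1 = (E_k^\epsilon)_{kk}$), so the entire task reduces to controlling the off-diagonal entries of the $k$-th column,
\begin{equation*}
H_{ik} = [-b_{ik}]_+ r_k + \frac{\sum_{m=0}^{r_k} m\, z_{k,m}\, \hat y_k^m\, b_{ik}}{\sum_{m=0}^{r_k} z_{k,m}\, \hat y_k^m} \qquad (i \neq k),
\end{equation*}
whose $X$-dependence is carried entirely by the single quantity $\hat y_k = y_k \prod_j x_j^{b_{jk}}$.

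The trick I would use is to drive $\hat y_k$ to $+\infty$ or to $0$ by a one-parameter specialization of the cluster variables. Concretely, set $x_j = t$ for every $j \in I_k^+ := \{j : b_{jk} \geq 0\}$ and call the resulting specialization $H_0(t)$; then $\hat y_k$ is a positive monomial multiple of $t^{\sum_{j \in I_k^+} b_{jk}}$, so $\hat y_k \to +\infty$ as $t \to +\infty$ and $\hat y_k \to 0^+$ as $t \to 0^+$ (in the non-degenerate case where column $k$ of $B$ is not identically zero). Comparing leading terms in numerator and denominator of the rational expression above, the $m = r_k$ term dominates in the first limit and the $m = 0$ term in the second, yielding
\begin{equation*}
\lim_{t \to +\infty} H_0(t)_{ik} = [-b_{ik}]_+ r_k + r_k b_{ik} = [b_{ik}]_+ r_k, \qquad \lim_{t \to 0^+} H_0(t)_{ik} = [-b_{ik}]_+ r_k,
\end{equation*}
so that $H_0(t) \to E_k^+$ in the first limit and $H_0(t) \to E_k^-$ in the second. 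A short case analysis on the sign of $b_{ik}$ confirms both formulas agree with the entries of $E_k^{\pm}$.

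Finally, Lemma \ref{3lem1} is an identity of matrices with entries in the fraction field of $\mathbb{Z}\mathbb{P}[x_{1;u},\ldots,x_{n;u}]$, and crucially its right-hand side $B_u R^{-1} S^{-1}$ is independent of the $x_{j;u}$. The identity is therefore stable under the specialization $x_j \mapsto t$ for $j \in I_k^+$, and passage to the two limits $t \to +\infty$ and $t \to 0^+$ then produces $E_k^\epsilon \bar B R^{-1} S^{-1} (E_k^\epsilon)^\top = B R^{-1} S^{-1}$ for $\epsilon = +$ and $\epsilon = -$ respectively. The only subtlety I anticipate is the degenerate case where column $k$ of $B$ is identically zero, in which $E_k^+ = E_k^- = H$ holds on the nose and no limiting argument is required; the main (modest) computational obstacle is the leading-term comparison in the rational function of $\hat y_k$, which is straightforward since the $z_{k,m}$ are fixed coefficients independent of $t$.
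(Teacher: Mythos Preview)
Your proposal is correct and essentially identical to the paper's own proof: you specialize $x_j = t$ for $j \in I_k^+ = \{j : b_{jk} \geq 0\}$, take the limits $t \to +\infty$ and $t \to 0^+$ to force $H_0 \to E_k^+$ and $H_0 \to E_k^-$, and then invoke Lemma~\ref{3lem1}. The only differences are cosmetic: you spell out the leading-term comparison in the rational function of $\hat y_k$ and flag the degenerate zero-column case, both of which the paper leaves implicit.
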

\begin{Remark}
By corollary \ref{3cor1},  we obtain $\mu_k(B)=E_k^{\epsilon}BR^{-1}S^{-1}(E_k^{\epsilon})^\top SR$, which was proved for standard cluster pattern in \cite{BZ}.

\end{Remark}

 Let $t_0,~t$ be two vertices in $\mathbb T_n$ with a walk  $$t_0^{~\underline{  \quad k_1\quad   }}~ t_1^{~\underline{\quad k_2 \quad}} ~t_2^{~\underline{\quad k_3 \quad}} ~\cdots ~t_{m-1} ^{~\underline{~\quad k_{m} \quad}}~ t_m=t$$  connecting $t_0$ and $t$ in $\mathbb T_n$. Write $\Sigma_{t_j}$  the seed corresponding to $t_j,~j=1,2,\cdots,m$.

Now we can give the useful formula as follows:

\begin{Theorem}
\label{3thm1}({\bf Cluster Formula})~ Keep the above notations. It holds that $$H_{t_0}^{t}(X)(B_tR^{-1}S^{-1}) H_{t_0}^{t}(X)^{\top}=B_{t_0}R^{-1}S^{-1}, \;\;\;\;\;\;\; \text{and}\;\;\;\;\;\;\; detH_{t_0}^{t}(X)=(-1)^m.$$

\begin{proof}
By Lemma \ref{3lem2}, $H_{t_0}^{t}(X)=H_{t_0}^{t_m}(X)=H_{t_0}^{t_1}(X)H_{t_1}^{t_2}(X)\cdots H_{t_{m-1}}^{t_m}(X)$. By Lemma \ref{3lem1},
\begin{eqnarray}
&&H_{t_0}^{t}(X)(B_tR^{-1}S^{-1}) H_{t_0}^{t}(X)^{\top}\nonumber\\
&=&H_{t_0}^{t_1}(X)H_{t_1}^{t_2}(X)\cdots H_{t_{m-1}}^{t_m}(X)(B_{t_m}R^{-1}S^{-1})H_{t_{m-1}}^{t_m}(X)^{\top}
H_{t_{m-2}}^{t_{m-1}}(X)^{\top}\cdots H_{t_0}^{t_1}(X)^{\top}\nonumber\\
&=&H_{t_0}^{t_1}(X)H_{t_1}^{t_2}(X)\cdots H_{t_{m-2}}^{t_{m-1}}(X)(B_{t_{m-1}}R^{-1}S^{-1})H_{t_{m-2}}^{t_{m-1}}(X)^{\top}
H_{t_{m-3}}^{t_{m-2}}(X)^{\top}\cdots H_{t_0}^{t_1}(X)^{\top}\nonumber\\
&=&\cdots=H_{t_0}^{t_1}(X)(B_{t_1}R^{-1}S^{-1})H_{t_0}^{t_1}(X)^{\top}=B_{t_0}R^{-1}S^{-1}.\nonumber
\end{eqnarray}
And, $detH_{t_0}^{t}(X)=detH_{t_0}^{t_1}(X)detH_{t_1}^{t_2}(X)\cdots detH_{t_{m-1}}^{t_{m}}(X)=(-1)^m.$
\end{proof}

\end{Theorem}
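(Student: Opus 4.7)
The plan is to reduce the statement to the two preceding lemmas by an induction (equivalently, a telescoping) along the walk $t_0 \,\text{---}\, t_1 \,\text{---}\, \cdots \,\text{---}\, t_m = t$. The real work has already been done in Lemma \ref{3lem1} (single-step conjugation identity and $\det = -1$) and Lemma \ref{3lem2} (the chain rule factorization of $H$); what remains is essentially bookkeeping.

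First I would invoke Lemma \ref{3lem2} iteratively to get the factorization
\[
H_{t_0}^{t}(X) = H_{t_0}^{t_1}(X)\, H_{t_1}^{t_2}(X)\, \cdots\, H_{t_{m-1}}^{t_m}(X).
\]
Substituting this product (and its transpose, which reverses the order) into the left-hand side $H_{t_0}^{t}(X)(B_tR^{-1}S^{-1}) H_{t_0}^{t}(X)^{\top}$, the expression becomes a long sandwich. Then I would peel off the innermost layer by applying Lemma \ref{3lem1} to the pair $(t_{m-1},t_m)$, which converts $H_{t_{m-1}}^{t_m}(X)(B_{t_m}R^{-1}S^{-1})H_{t_{m-1}}^{t_m}(X)^{\top}$ into $B_{t_{m-1}}R^{-1}S^{-1}$. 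Iterating this step for $(t_{m-2},t_{m-1})$, then $(t_{m-3},t_{m-2})$, and so on down to $(t_0,t_1)$, the middle factor telescopes all the way to $B_{t_0}R^{-1}S^{-1}$, giving the desired identity.

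For the determinant claim, I would use multiplicativity of the determinant on the same factorization. Lemma \ref{3lem1} yields $\det H_{t_{j-1}}^{t_j}(X) = -1$ for each consecutive pair, so $\det H_{t_0}^{t}(X) = \prod_{j=1}^{m}(-1) = (-1)^m$.

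I do not expect any genuine obstacle: the only thing to be careful about is that each application of Lemma \ref{3lem1} requires $\Sigma_{t_j} = \mu_{k_j}(\Sigma_{t_{j-1}})$, which is precisely the hypothesis built into the walk on $\mathbb{T}_n$, and that one tracks transposes correctly (the rightmost $H$ in the product becomes the leftmost $H^{\top}$, matching the innermost pair one wants to collapse first). Conceptually the statement is a ``path-independent'' or ``flat connection'' type assertion whose infinitesimal (one-step) version is Lemma \ref{3lem1}, and the composition property Lemma \ref{3lem2} guarantees that the integrated version holds globally on $\mathbb{T}_n$.
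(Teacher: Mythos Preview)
Your proposal is correct and follows exactly the same approach as the paper: factor $H_{t_0}^{t}(X)$ via Lemma~\ref{3lem2}, then telescope the conjugation using Lemma~\ref{3lem1} step by step along the walk, and likewise take the product of the single-step determinants. There is nothing to add.
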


\begin{Corollary}
\label{3cor2} $rank(B_t)=rank(B_{t_0})$ and $det(B_t)=det(B_{t_0})$.

\end{Corollary}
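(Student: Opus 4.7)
The plan is to read Corollary \ref{3cor2} as an immediate consequence of the Cluster Formula (Theorem \ref{3thm1}). That theorem exhibits the identity
$$H_{t_0}^{t}(X)\bigl(B_tR^{-1}S^{-1}\bigr)H_{t_0}^{t}(X)^{\top}=B_{t_0}R^{-1}S^{-1},$$
which displays $B_tR^{-1}S^{-1}$ and $B_{t_0}R^{-1}S^{-1}$ as matrices congruent via the matrix $H_{t_0}^{t}(X)$ (with entries in the ambient field $\mathcal F$). The theorem also gives $\det H_{t_0}^{t}(X)=(-1)^{m}$, so $H_{t_0}^{t}(X)$ is invertible over $\mathcal F$. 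Everything after this is linear algebra over $\mathcal F$.

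For the rank statement, I would argue that since multiplication by invertible matrices (on either side) does not change rank, the congruence identity above yields
$$\mathrm{rank}\bigl(B_tR^{-1}S^{-1}\bigr)=\mathrm{rank}\bigl(B_{t_0}R^{-1}S^{-1}\bigr).$$
Because $R$ and $S$ are positive diagonal integer matrices, hence invertible, their inverses are also invertible, and the rank of $B_tR^{-1}S^{-1}$ equals the rank of $B_t$, and similarly for $t_0$. This gives $\mathrm{rank}(B_t)=\mathrm{rank}(B_{t_0})$.

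For the determinant statement, I would take determinants of both sides of the cluster formula. The left side contributes $(\det H_{t_0}^{t}(X))^{2}\cdot\det(B_tR^{-1}S^{-1})=\det(B_t)\det(R^{-1}S^{-1})$, using $((-1)^{m})^{2}=1$; the right side equals $\det(B_{t_0})\det(R^{-1}S^{-1})$. Since $\det(R^{-1}S^{-1})\neq 0$, we can cancel it to conclude $\det(B_t)=\det(B_{t_0})$.

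There is really no hard step here: the entire content sits in the cluster formula, which was the main theorem. The only mild subtlety is that the congruence takes place over the rational function field $\mathcal F$ rather than over $\mathbb Z$, but since $B_t$, $B_{t_0}$, $R$ and $S$ are all scalar matrices (independent of the cluster variables), the resulting numerical identities on ranks and determinants are automatically integer identities, so no further descent argument is needed.
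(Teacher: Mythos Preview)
Your proposal is correct and is precisely the argument the paper intends: the corollary is stated immediately after the Cluster Formula with no separate proof, and your derivation---reading off rank equality from the congruence by an invertible matrix, and determinant equality from $(\det H_{t_0}^{t}(X))^2=1$---is exactly what the authors have in mind. One minor terminological remark: ``scalar matrix'' usually means a multiple of the identity, so it would be clearer to say that $B_t$, $B_{t_0}$, $R$, $S$ have constant (integer) entries.
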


\begin{Corollary}
\label{cor3}If $X_t=X_{t_0}$, then $B_t=B_{t_0}$.

\begin{proof}
If  $X_t=X_{t_0}$, then $x_{i;t}=x_{i;t_0}$. In this case, $H_{t_0}^{t}(X)=I_n$, so we have $B_t=B_{t_0}$.
\end{proof}
\end{Corollary}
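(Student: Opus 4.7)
The plan is to apply the Cluster Formula (Theorem \ref{3thm1}) directly; the hypothesis $X_t = X_{t_0}$ essentially trivializes the Jacobian factor and forces the exchange matrices to coincide.

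First I would observe that the assumption $X_t=X_{t_0}$ means $x_{j;t}=x_{j;t_0}$ as elements of $\mathcal F$ for every $j\in\langle n\rangle$. Then I would compute $H_{t_0}^{t}(X)$ entrywise from its definition:
\[
\bigl(H_{t_0}^{t}(X)\bigr)_{ij}=\frac{x_{i;t_0}}{x_{j;t}}\cdot\frac{\partial x_{j;t}}{\partial x_{i;t_0}}=\frac{x_{i;t_0}}{x_{j;t_0}}\cdot\frac{\partial x_{j;t_0}}{\partial x_{i;t_0}}=\frac{x_{i;t_0}}{x_{j;t_0}}\cdot\delta_{ij}=\delta_{ij},
\]
using the algebraic independence of the initial cluster variables $x_{1;t_0},\ldots,x_{n;t_0}$ over $\mathbb Z\mathbb P$. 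Hence $H_{t_0}^{t}(X)=I_n$.

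Next I would substitute this into the cluster formula
\[
H_{t_0}^{t}(X)\bigl(B_t R^{-1}S^{-1}\bigr)H_{t_0}^{t}(X)^{\top}=B_{t_0}R^{-1}S^{-1},
\]
which collapses to $B_t R^{-1}S^{-1}=B_{t_0}R^{-1}S^{-1}$. Since $R$ and $S$ are diagonal matrices with positive integer diagonal entries, $R^{-1}S^{-1}$ is invertible, so right-multiplying by $SR$ yields $B_t=B_{t_0}$.

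There is essentially no obstacle here: once the Cluster Formula is in hand, the argument is a one-line substitution. The only subtlety worth flagging in the write-up is that the vanishing of off-diagonal derivatives $\partial x_{j;t_0}/\partial x_{i;t_0}$ for $i\ne j$ uses precisely the algebraic independence built into the definition of a seed, so this is where the ``cluster'' structure (as opposed to a mere set of elements of $\mathcal F$) enters.
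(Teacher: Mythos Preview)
Your proof is correct and follows exactly the paper's approach: identify $H_{t_0}^{t}(X)=I_n$ from $X_t=X_{t_0}$ and then read off $B_t=B_{t_0}$ from the Cluster Formula. The paper's proof is just a terser version of what you wrote; your added remarks on algebraic independence and invertibility of $R^{-1}S^{-1}$ merely make explicit what the paper leaves implicit.
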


\begin{Example}
Consider $S=diag\{1,2\}$,$R=diag\{2,1\},~Z=(z_{1,1})$, assume that $M$ is an $(R,Z)$-cluster pattern via initial seed $\Sigma_{t_0}=(X_{t_0}, Y_{t_0}, B_{t_0})$ with $R$-skew-balance $S$, where $X_{t_0}=(x_1,x_2),~Y_{t_0}=(y_1,y_2)$, $B_{t_0}=\begin{pmatrix}0&-1\\1&0\end{pmatrix}$. Let $\Sigma_t=\mu_2\mu_1(\Sigma_{t_0})$, we have $$x_{1;t}=\frac{1+z\hat y_1+\hat y_2^2}{x_1(1\oplus zy_1\oplus y_1^2)},\;\;~x_{2;t}=\frac{1+\hat y_2+z\hat y_1\hat y_2+\hat y_1^2\hat y_2}{x_2(1\oplus y_2\oplus zy_1y_2\oplus y_1^2y_2)},\;\;~B_t=B_{t_0},$$ where $\hat y_1=y_1x_2,~ \hat y_2=y_2x_1^{-1}$. Thus
$$H_{t_0}^{t}(X)=\begin{pmatrix}-1&\frac{-y_2-zy_1y_2x_2-y_1^2y_2x_2^2}{x_1+y_2+zy_1y_2x_2+y_1^2y_2x_2^2}\\ \frac{zy_1x_2+2y_1^2x_2^2}{1+zy_1x_2+y_1^2x_2^2}&\frac{y_1^2y_2x_2^2-x_1-y_2}{x_1+y_2+zy_1y_2x_2+y_1^2y_2x_2^2} \end{pmatrix}.$$
 It is easy to check that $H_{t_0}^{t}(X)(B_tR^{-1}S^{-1}) H_{t_0}^{t}(X)^{T}=B_{t_0}R^{-1}S^{-1}$ and $det(H_{t_0}^{t}(X))=1$.

More information on this example can be seen at Example 2.3 of \cite{NT}.

\end{Example}

\subsection{Connection between cluster formula and compatible 2-forms}.

In \cite{GSV1,GSV}, M. Gekhtman, M. Shapiro and A. Vainshtein defined a closed differential 2-form $\omega$ compatible with a skew-symmerizable cluster algebra and proved such 2-form always exists for a cluster algebra of geometric type if its exchange matrices  have no zero rows. In this part, we give the connection between compatible 2-forms and the cluster formula, and in particular, we prove the compatible 2-form always exists for any $(R,Z)$ cluster pattern.

\begin{Definition}
A closed rational differential 2-form $\omega$ on an $n$-affine space is {\bf compatible with} the $(R,Z)$-cluster pattern $M$ if for any cluster $X=(x_1,\cdots, x_n)$ one has $\omega=\sum_{i=1}^n \sum_{j=1}^n w_{ij}\frac{d x_i}{x_i}\wedge \frac{d x_j}{ x_j}$, with $w_{ij}\in\mathbb Q$. The matrix $\Omega=(w_{ij})$ is called the {\bf coefficient matrix} of $\omega$ with respect to $X$.
\end{Definition}

Trivially, the coefficient matrix $\Omega$ is skew-symmetric.

\begin{Theorem}
Let $M$ be an $(R,Z)$-cluster pattern with any coefficients.

(i)~ A closed rational differential 2-form $\omega$ on an n-affine space is compatible with $M$ if and only if there exists a family of skew-symmetric matrices $\{\Omega_t\in\mathbb Q^{n\times n})|t\in\mathbb T_n\}$ such that for any $t_0,t\in\mathbb T_n$, we have $H_{t_0}^{t}(X)\Omega_t H_{t_0}^{t}(X)^{\top}=\Omega_{t_0}$.

(ii)~ In particular, there always exists  a closed rational differential 2-form $\omega$ compatible with $M$.
\end{Theorem}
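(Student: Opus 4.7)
The plan is to prove (i) by a direct change-of-basis calculation for logarithmic 2-forms, and then to deduce (ii) by exhibiting a concrete family of matrices via the cluster formula.

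For (i), the forward direction, I would first express how the logarithmic forms transform between clusters. Since $x_{i;t}$ is a rational function of $x_{1;t_0},\dots,x_{n;t_0}$, one has
\[
\frac{d x_{i;t}}{x_{i;t}} \;=\; \sum_{k=1}^n \frac{x_{k;t_0}}{x_{i;t}}\frac{\partial x_{i;t}}{\partial x_{k;t_0}}\cdot \frac{d x_{k;t_0}}{x_{k;t_0}} \;=\; \sum_{k=1}^n (H_{t_0}^t(X))_{ki}\,\frac{d x_{k;t_0}}{x_{k;t_0}},
\]
directly from the definition of $H_{t_0}^t(X)$. Substituting this into $\omega=\sum_{i,j}(\Omega_t)_{ij}\frac{d x_{i;t}}{x_{i;t}}\wedge\frac{d x_{j;t}}{x_{j;t}}$ and collecting, the coefficient of $\frac{d x_{k;t_0}}{x_{k;t_0}}\wedge\frac{d x_{l;t_0}}{x_{l;t_0}}$ becomes $(H_{t_0}^t(X)\Omega_t H_{t_0}^t(X)^\top)_{kl}$. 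Comparing with the expression in the $t_0$-cluster and using skew-symmetry of both matrices yields $H_{t_0}^t(X)\Omega_tH_{t_0}^t(X)^\top=\Omega_{t_0}$.

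For the converse in (i), given a family $\{\Omega_t\}$ satisfying the transformation rule, I would define $\omega$ using any fixed cluster, e.g. $\omega:=\sum_{i,j}(\Omega_{t_0})_{ij}\frac{d x_{i;t_0}}{x_{i;t_0}}\wedge\frac{d x_{j;t_0}}{x_{j;t_0}}$. The computation above, together with the transformation rule and Lemma \ref{3lem2} (composition of $H$-matrices), guarantees that the same 2-form $\omega$ is represented in every cluster $X_t$ by the rational skew-symmetric matrix $\Omega_t$, so $\omega$ is compatible with $M$. Closedness is automatic because each $\frac{d x_{i;t}}{x_{i;t}}$ is closed and the coefficients $(\Omega_t)_{ij}\in\mathbb Q$ are constants, so $d\omega=0$.

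For (ii), the natural candidate is $\Omega_t:=B_tR^{-1}S^{-1}$ for every $t\in\mathbb T_n$. I would first check skew-symmetry: since $SRB_t$ is skew-symmetric by the definition of the $R$-skew-balance $S$, one has $B_t^\top=-SRB_tS^{-1}R^{-1}$, and because $R,S$ are diagonal they commute with everything they touch, giving $(B_tR^{-1}S^{-1})^\top=-B_tR^{-1}S^{-1}$. The entries lie in $\mathbb Q$ because $R,S$ are positive integer diagonal matrices. Then the Cluster Formula (Theorem \ref{3thm1}) asserts precisely $H_{t_0}^t(X)(B_tR^{-1}S^{-1})H_{t_0}^t(X)^\top=B_{t_0}R^{-1}S^{-1}$, which is the transformation rule required by part (i). Thus part (i) produces the desired compatible 2-form.

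The only nontrivial point I anticipate is bookkeeping: making sure the indices $(k,i)$ versus $(i,k)$ in the matrix $H_{t_0}^t(X)$ are handled consistently so that the sandwiched product $H\Omega H^\top$ (and not its transpose) appears, and verifying that the two different conventions — $\Omega$ read off from $\omega$ versus the coefficient matrix $(\Omega_t)_{ij}$ — match after using skew-symmetry. Everything else is a direct consequence of Lemmas \ref{3lem2}, \ref{3lem1} and Theorem \ref{3thm1}.
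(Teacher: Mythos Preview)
Your proposal is correct and follows essentially the same route as the paper: the same change-of-basis computation for logarithmic 1-forms to establish $H_{t_0}^t(X)\Omega_t H_{t_0}^t(X)^\top=\Omega_{t_0}$ in (i), and the same choice $\Omega_t=B_tR^{-1}S^{-1}$ combined with the cluster formula (Theorem \ref{3thm1}) for (ii). Your extra remarks on closedness and on the explicit verification that $B_tR^{-1}S^{-1}$ is skew-symmetric are welcome details the paper leaves implicit; the reference to Lemma \ref{3lem2} in the converse is harmless but not actually needed.
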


\begin{proof} (i): ~~
``$\Longrightarrow$": Assume that $\omega$ is a closed rational differential 2-form on the n-affine space compatible with $M$, $\Omega_t=(w_{ij}^t)$ is the coefficient matrix of $\omega$ with respect to $X_t$.
We know $dx_{i;t}=\sum_{k=1}^n \frac{\partial x_{i;t}}{\partial x_{k;t_0}}d x_{k;t_0}$, thus $\frac{d x_{i;t}}{x_{i;t}}=\sum_{k=1}^n  \frac{1}{x_{i;t}}\cdot \frac{\partial x_{i;t}}{\partial x_{k;t_0}}d x_{k;t_0}$, and

\begin{eqnarray}
\sum_{i=1}^n \sum_{j=1}^n w_{ij}^t\frac{d x_{i;t}}{x_{i;t}}\wedge \frac{d x_{j;t}}{x_{j;t}}&=&\sum_{i=1}^n \sum_{j=1}^n w_{ij}^t(\sum_{k=1}^n  \frac{1}{x_{i;t}}\cdot \frac{\partial x_{i;t}}{\partial x_{k;t_0}}d x_{k;t_0})\wedge(\sum_{l=1}^n  \frac{1}{x_{j;t}}\cdot \frac{\partial x_{j;t}}{\partial x_{l;t_0}}d x_{l;t_0})\nonumber\\
&=&\sum_{i=1}^n \sum_{j=1}^n
\sum_{k=1}^n \sum_{l=1}^n w_{ij}^t\frac{x_{k;t_0}}{x_{i;t}} \frac{\partial x_{i;t}}{\partial x_{k;t_0}}\cdot\frac{x_{l;t_0}}{x_{j;t}} \frac{\partial x_{j;t}}{\partial x_{l;t_0}}\frac{dx_{k;t_0}}{x_{k;t_0}}\wedge\frac{dx_{l;t_0}}{x_{l;t_0}}\nonumber\\
&=&\sum_{k=1}^n \sum_{l=1}^n (\sum_{i=1}^n \sum_{j=1}^nw_{ij}^t\frac{x_{k;t_0}}{x_{i;t}}\frac{\partial x_{i;t}}{\partial x_{k;t_0}}\cdot\frac{x_{l;t_0}}{x_{j;t}}\frac{\partial x_{j;t}}{\partial x_{l;t_0}})\frac{dx_{k;t_0}}{x_{k;t_0}}\wedge\frac{dx_{l;t_0}}{x_{l;t_0}}.\nonumber
\end{eqnarray}

Since
$$\omega=\sum_{i=1}^n \sum_{j=1}^n w_{ij}^t\frac{d x_{i;t}}{x_{i;t}}\wedge \frac{d x_{j;t}}{x_{j;t}}=\sum_{k=1}^n \sum_{l=1}^n w_{kl}^{t_0}
\frac{d x_{k;t_0}}{x_{k;t_0}}\wedge \frac{d x_{l;t_0}}{x_{l;t_0}},$$
we have $$w_{kl}^{t_0}=\sum_{i=1}^n \sum_{j=1}^nw_{ij}^t\frac{x_{k;t_0}}{x_{i;t}}\frac{\partial x_{i;t}}{\partial x_{k;t_0}}\cdot\frac{x_{l;t_0}}{x_{j;t}}\frac{\partial x_{j;t}}{\partial x_{l;t_0}},$$
that is,
$$\Omega_{t_0}=H_{t_0}^{t}(X)\Omega_t H_{t_0}^{t}(X)^{\top}.$$

``$\Longleftarrow$": Assume that $\{\Omega_t\in \mathbb Q^{n\times n}|t\in\mathbb T_n\}$ is a set of skew-symmetric matrices, satisfying $$H_{t_0}^{t}(X)\Omega_t H_{t_0}^{t}(X)^{\top}=\Omega_{t_0},$$ for any $t_0,t\in\mathbb T_n$, let $\omega=\sum_{i=1}^n \sum_{j=1}^n w_{ij}^{t_0}\frac{d x_{i;t_0}}{x_{i;t_0}}\wedge \frac{d x_{j;t_0}}{ x_{j;t_0}},$ be a closed rational differential 2-form on an $n$-affine space.
 Replacing $dx_{i;t_0}=\sum_{k=1}^n \frac{\partial x_{i;t_0}}{\partial x_{k;t}}d x_{k;t}$ into $\omega$, we can see that $\omega$ is compatible with $M$.

(ii):~~
By Theorem \ref{3thm1}, $\{B_tR^{-1}S^{-1}\in\mathbb Q^{n\times n}|t\in\mathbb T_n\}$ is a family of skew symmetric matrices satisfying $H_{t_0}^{t}(X)(B_tR^{-1}S^{-1})H_{t_0}^{t}(X)^{\top}=B_{t_0}R^{-1}S^{-1}.$ Let $\Omega_t=B_tR^{-1}S^{-1}$. Then by (i),   there always exists  a closed rational differential 2-form $\omega$ compatible with $M$.
\end{proof}

\section{Answer to Conjecture \ref{conj} for generalized cluster algebras}

\subsection{ On Conjecture \ref{conj}(b) in case of weak geometric type }.

In this section, we firstly prove Conjecture \ref{conj} ($b$) for the generalized cluster patterns with coefficients of weak geometric type by using  the cluster formula (see Theorem \ref{4cor2}). The corresponding result for $(R,Z)$-cluster pattern with coefficients in general
 semmifield $\mathbb P$, will be studied in the second part of this section, using the theory of $D$-matrix pattern. Before proving Theorem \ref{4cor2}, we need some preparations.

\begin{Theorem}
 (Theorem 2.5 of \cite{CS})~ For any $(R,Z)$-cluster pattern with coefficients in $\mathbb P$, each cluster variable $x_{i;t}$ can be expressed as a Laurent polynomial in $\mathbb {ZP}[X_{t_0}^{\pm1}]$.

\end{Theorem}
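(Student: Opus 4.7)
The plan is to adapt the original caterpillar-lemma argument of Fomin and Zelevinsky to the polynomial exchange relations of the $(R,Z)$-setting, arguing by induction on the distance $d(t_0,t)$ in $\mathbb{T}_n$. The base cases are immediate: for $t = t_0$ the claim is trivial, and for $d(t_0,t) = 1$ (say $t = \mu_k(t_0)$) the exchange formula (\ref{eq1}) already expresses $x_{k;t}$ as $x_k^{-1}$ times a polynomial in the remaining $x_j$'s, with coefficients in $\mathbb{ZP}$, which is a Laurent polynomial in $X_{t_0}$ on the nose.

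For the inductive step, I would assume the claim at all vertices of distance less than $m$ and take $t$ with $d(t_0,t) = m$, writing $t = \mu_k(t')$ with $d(t_0,t') = m-1$. The non-$k$ cluster variables $x_{j;t} = x_{j;t'}$ are Laurent in $X_{t_0}$ by induction, so the only issue is $x_{k;t}$. The caterpillar trick is to pick an auxiliary neighbor $t'' = \mu_l(t_0)$ for some direction $l$, fit $t$ into a path running through $t''$, and apply induction to show that $x_{k;t}$ is Laurent in both $X_{t_0}$ and $X_{t''}$. Since these two clusters differ in exactly one variable, a coprimality argument for the two exchange polynomials attached to $t_0$ then forces $x_{k;t} \in \mathbb{ZP}[X_{t_0}^{\pm 1}]$.

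The main obstacle is precisely this coprimality step. In the standard case, the two competing binomials $\prod x_j^{[b_{jk}]_+} + \prod x_j^{[-b_{jk}]_+}$ share no nonunit factor, and this is what drives the intersection-of-Laurent-rings argument. In the $(R,Z)$-setting the exchange numerator is the higher-degree polynomial $\sum_{m=0}^{r_k} z_{k,m}\hat y_k^m$, and I would need to show that after any mutation it remains coprime to the analogous polynomial at the neighboring seed; the reciprocity condition $z_{i,m} = z_{i,r_i-m}$ ensures the polynomial is palindromic, which is the structural feature I would exploit to make the coprimality argument go through level by level in the caterpillar. An alternative route, which would sidestep some of this work, is to pass through a separation formula: in the principal-coefficient case one would establish that $x_{i;t}$ can be written in closed form via an $F$-polynomial (with constant term $1$) and a $g$-vector, from which the Laurent property follows directly, and then reduce general coefficients to principal coefficients by the standard specialization.
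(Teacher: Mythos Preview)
The paper does not give a proof of this statement at all: it is simply quoted as Theorem~2.5 of \cite{CS} and used as an input for the rest of Section~4. So there is nothing in the paper to compare your argument against.

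For what it is worth, your outline is broadly the right shape---Chekhov and Shapiro do adapt the Fomin--Zelevinsky caterpillar argument, and the place where the work lies is exactly the coprimality of the polynomial exchange numerators after one mutation. Two small comments. First, the reciprocity $z_{i,m}=z_{i,r_i-m}$ is not really the engine of the coprimality step; what one actually checks is that the two exchange polynomials, viewed as polynomials in the single freshly mutated variable, share no nonunit factor, and this is a direct computation rather than a consequence of palindromicity. Second, your alternative route through $F$-polynomials and a separation formula is circular in the logical order of this paper: Proposition~\ref{3pro1}, Proposition~\ref{4pro1}, and Theorem~\ref{4thm1} are all stated and used \emph{after} the Laurent phenomenon has been invoked, so you cannot appeal to them to establish it.
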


\begin{Definition}
 Let $M$ be an $(R,Z)$-cluster pattern with principle coefficients at $t_0$, by the Laurent property, each cluster variable $x_{i;t}$ is expressed as a Laurent polynomial $X_{i;t}\in\mathbb {ZP}[X_{t_0}^{\pm1}]$, called the {\bf X-function} of $x_{i,t}$, where $\mathbb P=Trop(Y_{t_0},Z)$.

\end{Definition}

\begin{Definition}
The {\bf $F$-polynomial} of $x_{i;t}$ is defined by $F_{i;t}=X_{i;t}|_{x_{1;t_0}=\cdots x_{n;t_0}=1}\in\mathbb Z[Y_{t_0},Z]$.

\end{Definition}
\begin{Proposition}
\label{3pro1}(Proposition 3.3 of \cite{NT}) We have $X_{i;t}\in\mathbb Z[X_{t_0}^{\pm1},Y_{t_0},Z]$.

\end{Proposition}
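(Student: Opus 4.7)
The plan is induction on the tree distance $d(t,t_0)$ in $\mathbb T_n$. The base case $d=0$ is immediate since $X_{i;t_0}=x_{i;t_0}\in\mathbb Z[X_{t_0}^{\pm1}]$. For the inductive step, pick $t'$ at distance $d+1$ and let $t$ be its neighbor on the path to $t_0$, so $t'=\mu_k(t)$ for some $k$; for $i\neq k$ the identity $X_{i;t'}=X_{i;t}$ closes the case, so the real work is at $i=k$, where (\ref{eq1}) gives
$$X_{k;t'}=X_{k;t}^{-1}\Bigl(\prod_j X_{j;t}^{[-b_{jk}^t]_+}\Bigr)^{r_k}\cdot\frac{\sum_{m=0}^{r_k}z_{k,m}\hat y_{k;t}^m}{\bigoplus_{m=0}^{r_k}z_{k,m}y_{k;t}^m},$$
with $\hat y_{k;t}=y_{k;t}\prod_j X_{j;t}^{b_{jk}^t}$. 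Because $M$ has principal coefficients at $t_0$, the ambient semifield is the tropical semifield $\mathbb P=\mathrm{Trop}(Y_{t_0},Z)$, and so $y_{k;t}$ is a Laurent monomial in $Y_{t_0},Z$.

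The key observation is that the tropical denominator $p_{k;t}:=\bigoplus_{m}z_{k,m}y_{k;t}^m$ is itself a single Laurent monomial in $Y_{t_0},Z$, namely the coordinate-wise minimum of the monomials $z_{k,m}y_{k;t}^m$. Consequently each ratio $z_{k,m}y_{k;t}^m/p_{k;t}$ is an honest monomial in $\mathbb Z[Y_{t_0},Z]$ with non-negative exponents, and after substituting $\hat y_{k;t}^m=y_{k;t}^m\prod_j X_{j;t}^{m\,b_{jk}^t}$ the rational factor in the mutation formula rewrites as
$$\frac{\sum_m z_{k,m}\hat y_{k;t}^m}{p_{k;t}}=\sum_{m=0}^{r_k}\frac{z_{k,m}y_{k;t}^m}{p_{k;t}}\prod_j X_{j;t}^{m\,b_{jk}^t}\in\mathbb Z[Y_{t_0},Z][X_{1;t}^{\pm1},\ldots,X_{n;t}^{\pm1}].$$
Together with the monomial factor $(\prod_j X_{j;t}^{[-b_{jk}^t]_+})^{r_k}$ and the inductive hypothesis $X_{j;t}\in\mathbb Z[Y_{t_0},Z][X_{t_0}^{\pm1}]$, this already shows that the product $X_{k;t}\cdot X_{k;t'}$ lies in $\mathbb Z[Y_{t_0},Z][X_{t_0}^{\pm1}]$.

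To deduce the claim for $X_{k;t'}$ itself I would combine with the Laurent phenomenon (Theorem 2.5 of \cite{CS}), which gives $X_{k;t'}\in\mathbb{ZP}[X_{t_0}^{\pm1}]=\mathbb Z[Y_{t_0}^{\pm1},Z^{\pm1}][X_{t_0}^{\pm1}]$ and, together with the polynomiality of $X_{k;t}\cdot X_{k;t'}$ in $Y_{t_0},Z$, should force each $X_{t_0}$-Laurent coefficient of $X_{k;t'}$ to be a polynomial in $Y_{t_0},Z$. The \textbf{main obstacle} is exactly this last division step: polynomiality of a product in $\mathbb Z[Y_{t_0}^{\pm1},Z^{\pm1}][X_{t_0}^{\pm1}]$ does not formally descend to its factors. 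The cleanest way around this is to run the induction in tandem with the separation-of-additions formula
$$X_{i;t}=\prod_{j}x_{j;t_0}^{g_{ij}^t}\cdot F_{i;t}\bigl(\hat y_{1;t_0},\ldots,\hat y_{n;t_0};Z\bigr)$$
for the generalized setting, establishing simultaneously by induction that each $F$-polynomial $F_{i;t}=X_{i;t}|_{X_{t_0}=1}$ is an actual polynomial (not merely a Laurent polynomial) in $Y_{t_0},Z$ with constant term $1$. Since each $\hat y_{j;t_0}=y_j\prod_k x_{k;t_0}^{b_{kj}^{t_0}}$ is a Laurent monomial in $X_{t_0}$ with monomial $Y_{t_0}$-coefficient, this immediately gives $F_{i;t}(\hat y_{t_0};Z)\in\mathbb Z[Y_{t_0},Z][X_{t_0}^{\pm1}]$, and multiplying by the monomial $\prod_j x_{j;t_0}^{g_{ij}^t}$ yields $X_{i;t}\in\mathbb Z[X_{t_0}^{\pm1},Y_{t_0},Z]$ as required.
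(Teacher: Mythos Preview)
The paper does not prove this proposition; it is quoted from \cite{NT} (Proposition~3.3 there) and used as a black box, so there is no in-paper argument to compare your attempt against.

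On the substance of your attempt: the first half correctly shows that $X_{k;t}X_{k;t'}\in\mathbb Z[Y_{t_0},Z][X_{t_0}^{\pm1}]$ (though you should say explicitly that the prefactor $(\prod_j X_{j;t}^{[-b_{jk}^t]_+})^{r_k}$ clears the negative powers of the $X_{j;t}$, so that you are substituting only non-negative powers of elements of $\mathbb Z[Y_{t_0},Z][X_{t_0}^{\pm1}]$---otherwise the inductive hypothesis does not apply). You are also right that the division step is the real obstruction. Your proposed repair, however, is circular as written: the separation formula and the constant-term-$1$ property you invoke are Theorem~\ref{4thm1} and Proposition~\ref{4pro1} of this paper, both imported from \cite{NT} with numbering (3.19--3.23) strictly \emph{after} the target Proposition~3.3, and in \cite{NT} their proofs rest on Proposition~3.3. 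Saying you will ``run the induction in tandem'' does not close the gap, because you never verify the inductive step for the auxiliary statements; in particular, the constant-term-$1$ property is, already in the classical theory, equivalent to sign-coherence of $c$-vectors and is not a formal consequence of the exchange relation. In effect your proposal reduces the claim to exactly the content of \cite{NT} that the paper is citing, without supplying an independent argument.
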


\begin{Proposition}
\label{4pro1}(Proposition 3.19 and Theorem 3.20 of \cite{NT}) Each {\bf $F$-polynomial} has constant term 1.

\end{Proposition}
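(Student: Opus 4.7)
The plan is to induct on the distance $d(t_0,t)$ in the tree $\mathbb{T}_n$. The base case $t=t_0$ is immediate, since $X_{i;t_0}=x_{i;t_0}$, so $F_{i;t_0}=1$ and its constant term equals $1$. For the inductive step, assume every $F_{j;t}$ has constant term $1$ at all vertices $t$ of distance $\le N$ from $t_0$, and let $\bar t=\mu_k(t)$ with $d(t_0,t)=N$. If $i\ne k$, then $x_{i;\bar t}=x_{i;t}$, so $F_{i;\bar t}=F_{i;t}$ and the inductive hypothesis gives the claim directly. Only the case $i=k$ needs genuine work.

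For $i=k$, I specialize the exchange relation (\ref{eq1}) to the Laurent expansions $X_{j;t}$ of the cluster variables in $X_{t_0}$ and then set $x_{j;t_0}=1$; using that $\hat y_{k;t}|_{X_{t_0}=1}=y_{k;t}\prod_j F_{j;t}^{b_{jk}^t}$, this yields the $F$-polynomial recursion
$$F_{k;\bar t}\,F_{k;t}=\Bigl(\prod_{j=1}^n F_{j;t}^{[-b_{jk}^t]_+}\Bigr)^{r_k}\cdot\frac{\sum_{m=0}^{r_k}z_{k,m}\,y_{k;t}^{\,m}\prod_j F_{j;t}^{m\,b_{jk}^t}}{\bigoplus_{m=0}^{r_k}z_{k,m}\,y_{k;t}^{\,m}}.$$
Now I evaluate at $y_{i;t_0}=0$ and $z_{i,m}=0$ for $1\le m\le r_i-1$: by the inductive hypothesis every $F_{j;t}|_{0}=1$, so $\bigl(\prod F_{j;t}^{[-b_{jk}^t]_+}\bigr)^{r_k}$ and $F_{k;t}$ both specialize to $1$; in the numerator of the fraction the $m=0$ contribution is $z_{k,0}=1$, the contributions for $1\le m\le r_k-1$ vanish because of the explicit factor $z_{k,m}$, and the $m=r_k$ term reduces to $y_{k;t}^{\,r_k}|_{0}$. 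The remaining task is to pair this against the tropical denominator $D=\bigoplus_m z_{k,m}y_{k;t}^{\,m}$ at $0$.

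The hard part is precisely this last step, which is where I would invoke sign-coherence of the generalized $c$-vector $c_{k;t}$: combining the $C$-matrix mutation formula of Proposition \ref{2pro1} with the identity $\mu_k^g(B)R=\mu_k(BR)$ of Remark \ref{rmkb}, sign-coherence in the $(R,Z)$-setting can be imported from the standard sign-coherence theorem applied to the pattern associated with $BR$ (and in principal coefficients one also knows $c_{k;t}\ne 0$). Once available, it forces the $y$-exponent vector of $y_{k;t}$ to be either entirely non-negative or entirely non-positive. In the first case the tropical minimum defining $D$ is achieved at $m=0$, so $D=1$, while simultaneously $y_{k;t}^{\,r_k}|_{0}=0$, and the fraction evaluates to $1$. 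In the second case the minimum is achieved at $m=r_k$ so $D=y_{k;t}^{\,r_k}$, and after multiplying numerator and denominator by $y_{k;t}^{-r_k}$ the analogous constant-term analysis isolates the $m=r_k$ term as the surviving $1$. Either way $F_{k;\bar t}|_{0}=1$, closing the induction.
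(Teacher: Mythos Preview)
The paper does not give its own proof of this proposition; it is quoted from \cite{NT} (their Proposition~3.19 and Theorem~3.20). So there is nothing to compare against here except the argument in \cite{NT}, and your sketch is precisely that argument: induct along $\mathbb{T}_n$, write down the $F$-polynomial recursion obtained from (\ref{eq1}) by specializing $X_{t_0}\to 1$, and then use sign-coherence of the generalized $c$-vector $c_{k;t}$ to see that exactly one of the two extreme terms $m=0$ or $m=r_k$ survives after setting $Y_{t_0}=0$. Your identification of sign-coherence as the crucial input is correct and is exactly why the paper cites \emph{two} results from \cite{NT}.

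Two small points of precision. First, the reduction of generalized sign-coherence to the standard theorem is slightly more delicate than the sentence ``apply the standard theorem to the pattern with $BR$'' suggests. Combining Proposition~\ref{2pro1} with Remark~\ref{rmkb}(ii) one finds that the generalized principal $C$-matrix satisfies $C^{gen}_t = R\,C^{std}_t\,R^{-1}$, where $C^{std}_t$ is the standard principal $C$-matrix for initial exchange matrix $B_{t_0}R$; since $R$ is positive diagonal this transfers column sign-coherence, but it is not literally the same matrix. Second, setting $z_{k,m}=0$ is unnecessary: once $Y_{t_0}=0$, in the $c_{k;t}>0$ case every term with $m\ge 1$ already vanishes because $y_{k;t}^m|_{Y_{t_0}=0}=0$, and dually in the $c_{k;t}<0$ case every term with $m\le r_k-1$ vanishes after dividing by $y_{k;t}^{r_k}$. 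Dropping the $Z=0$ specialization gives the sharper conclusion $F_{i;t}|_{Y_{t_0}=0}=1$ in $\mathbb{Z}[Z]$, which is what the paper actually uses (e.g.\ in Proposition~\ref{thmg}).
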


\begin{Corollary}
\label{4cor3}Let $M$ be an $(R,Z)$-cluster pattern with principle coefficients at $t_0$, if $x$ is a cluster variable in $M$, then $-x$ can not be a cluster variable in $M$.
\end{Corollary}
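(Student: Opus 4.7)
The plan is to argue by contradiction using the $F$-polynomial. Suppose both $x$ and $-x$ are cluster variables of $M$, so there exist vertices $t,t' \in \mathbb{T}_n$ and indices $i,j \in \langle n\rangle$ with $x = x_{i;t}$ and $-x = x_{j;t'}$. By the Laurent phenomenon together with Proposition \ref{3pro1}, both $x_{i;t}$ and $x_{j;t'}$ are expressed as elements of the polynomial ring $\mathbb{Z}[X_{t_0}^{\pm 1}, Y_{t_0}, Z]$; call these expressions $X_{i;t}$ and $X_{j;t'}$. Since the assignment of cluster variables to their Laurent expressions in this ring is well-defined, the hypothesis $x_{j;t'} = -x_{i;t}$ forces the identity
\[
X_{j;t'} = -X_{i;t}
\]
in $\mathbb{Z}[X_{t_0}^{\pm 1}, Y_{t_0}, Z]$.

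Next I would specialize at $x_{1;t_0} = \cdots = x_{n;t_0} = 1$ to pass to $F$-polynomials. By definition of the $F$-polynomial, this specialization sends $X_{i;t}$ to $F_{i;t}$ and $X_{j;t'}$ to $F_{j;t'}$, so we get
\[
F_{j;t'} = -F_{i;t}
\]
as elements of $\mathbb{Z}[Y_{t_0}, Z]$. Now invoke Proposition \ref{4pro1}: each $F$-polynomial has constant term $1$. Taking the constant term of both sides of the equation above therefore yields $1 = -1$ in $\mathbb{Z}$, a contradiction. Hence $-x$ cannot also be a cluster variable.

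There is essentially no hard step here; the argument reduces a potential symmetry between $x$ and $-x$ to a numerical contradiction on constant terms of $F$-polynomials, and the two key inputs (the integrality statement of Proposition \ref{3pro1} and the constant-term-$1$ property of Proposition \ref{4pro1}) are already in place. The only mild subtlety worth being careful about is the step $X_{j;t'} = -X_{i;t}$: one must make sure that the Laurent expansion of a cluster variable in $\mathbb{Z}[X_{t_0}^{\pm 1}, Y_{t_0}, Z]$ is unique, but this is immediate because $x_{1;t_0}, \ldots, x_{n;t_0}$ are algebraically independent over $\mathbb{Z}[Y_{t_0}, Z]$ by the very definition of a seed.
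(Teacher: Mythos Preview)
Your proof is correct and follows exactly the same approach as the paper: if both $x$ and $-x$ were cluster variables, their $F$-polynomials would be negatives of one another, contradicting Proposition~\ref{4pro1} that every $F$-polynomial has constant term $1$. You have simply written out the details (uniqueness of the Laurent expansion, passage from $X$-functions to $F$-polynomials) that the paper leaves implicit.
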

\begin{proof}
If both $x$ and $-x$ are cluster variables in $M$, assume that $F$ is the {\bf$F$-polynomial} corresponding to $x$, then $-F$ is the {\bf$F$-polynomial} corresponding to $-x$. This will contradict to that each {\bf$F$-polynomial} has constant term 1.
\end{proof}

Let $M$ be an $(R,Z)$-cluster pattern with principle coefficients and initial seed $\Sigma=(X,Y,B)$, the author in \cite{NT}
introduced a $\mathbb Z^n$-grading of $\mathbb Z[X^{\pm1},Y,Z]$ as follows:
$$deg(x_i)={\bf e}_i,~~deg(y_i)=-{\bf b}_i,~~deg(z_{i;m})=0,$$
where $\bf{e}_i$ is the $i$th column vector of $I_n$, and $\bf{b}_i$ is the $i$th column vector of the initial exchange matrix $B$.
Note that these degrees are vectors in $\mathbb Z^n$.

In \cite{NT}, the author proved that the $X$-functions are homogeneous with respect to the $\mathbb Z^n$-grading, and thanks to this, the $\bf g$-{\bf vector} of a
cluster variable $x_{i;t}$ is defined to be the degree of its $X$-function $X_{i;t}$. From this definition, we have $deg(X_{i;t})=(g_{1i}^t,~g_{2i}^t,~\cdots,~g_{ni}^t)^{\top}\in\mathbb Z^n$.

\begin{Theorem}
\label{4thm1}(Theorem 3.22 and Theorem 3.23 of \cite{NT}) Let $M$ be an $(R,Z)$-cluster pattern with coefficients in $\mathbb P$ and initial seed at $t_0$, $M_{pr}$ be the corresponding $(R,Z)$-cluster pattern with initial principle coefficients at $t_0$, which has the same initial cluster and exchange matrix with $M$, then for the cluster variables $x_{i;t}$ and the coefficients $y_{i;t}$ of $M$ at $t$, it holds that
\begin{eqnarray}
y_{i;t}&=&\prod\limits_{j=1}^ny_{j;t_0}^{c_{ji}^t}\prod\limits_{j=1}^n\left(F_{j;t}|_{\mathbb P(Y_{t_0},Z)}\right)^{b_{ji}^t};\\
x_{i;t}&=&\left(\prod\limits_{j=1}^nx_{j;t_0}^{g_{ji}^t}\right)\frac{F_{i;t}|_{\mathcal F}(\hat Y_{t_0},Z)}{F_{i;t}|_{\mathbb P}(Y_{t_0},Z)}.
\end{eqnarray}
\end{Theorem}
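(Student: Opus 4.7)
The plan is to prove both identities simultaneously by induction on the graph-distance $d(t_0,t)$ in $\mathbb{T}_n$, using the principal-coefficients pattern $M_{pr}$ as a universal template that carries the $F$-polynomials, $g$-vectors and $C$-matrix needed on the right-hand sides. The base case $t=t_0$ is immediate: $C_{t_0}=I_n$, $g_{ji}^{t_0}=\delta_{ji}$, and every $F_{i;t_0}=1$ (since the $X$-function of an initial cluster variable is itself), so both right-hand sides collapse to $y_{i;t_0}$ and $x_{i;t_0}$.

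For the inductive step, fix an edge in $\mathbb{T}_n$ joining $t$ to $t'=\mu_k(t)$ and assume both formulas hold at $t$. Denote by $\widetilde y_{i;t'}$ and $\widetilde x_{i;t'}$ the two right-hand sides evaluated using the data of $M_{pr}$ at $t'$. Via the $(R,Z)$-mutation rules (\ref{eq1})--(\ref{eq2}) applied at $t$, the true $y_{i;t'},x_{i;t'}$ can be rewritten in terms of the $y_{i;t},x_{i;t}$, and by induction these equal the proposed formulas at $t$. One then compares the two expressions monomial by monomial, using three mutation tools: the $C$-matrix recursion of Proposition \ref{2pro1}, the $B$-matrix recursion (\ref{eq3}), and an $F$-polynomial exchange relation of the shape $F_{k;t'}F_{k;t}=(\text{tropical factor in the $F_{j;t}$})\cdot\sum_{m=0}^{r_k}z_{k,m}\widehat Y_{k;t}^{\,m}$, obtained by specializing (\ref{eq1}) at $x_{1;t_0}=\cdots=x_{n;t_0}=1$ inside $M_{pr}$. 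The $g$-vector recursion required on the $x$-side follows from the $\mathbb{Z}^n$-homogeneity of the exchange relation under the grading $\deg(x_j)=\mathbf{e}_j$, $\deg(y_j)=-\mathbf{b}_j$, $\deg(z_{j,m})=0$, which the paper records just before the theorem statement; substituting this recursion into the inductive formula for $\widetilde x_{i;t'}$ regroups the factors into $\bigl(\prod_j x_{j;t_0}^{g_{ji}^{t'}}\bigr)\,F_{i;t'}|_{\mathcal F}(\widehat Y_{t_0},Z)/F_{i;t'}|_{\mathbb P}(Y_{t_0},Z)$ as required.

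The main obstacle is the interaction between the tropical sum $\bigoplus_{m=0}^{r_k}z_{k,m}y_{k;t}^m$ appearing in the mutation rule (\ref{eq2}) and the non-tropical companion $\sum_{m=0}^{r_k}z_{k,m}\hat y_{k;t}^{\,m}$ appearing in (\ref{eq1}). To align them one needs that the tropical evaluation, in the principal-coefficient model, always picks out the single monomial in $y_{j;t_0}$ whose exponent vector is controlled linearly by $C_t$; this is where Proposition \ref{4pro1} (each $F$-polynomial has constant term $1$) enters decisively, and Corollary \ref{4cor3} is invoked to guarantee no sign cancellation can spoil the match. Once this monomial-selection lemma is set up, the verification of the two mutation identities becomes a mechanical, if delicate, rearrangement of exponents involving the $r_k$-th power and the reciprocity $z_{k,m}=z_{k,r_k-m}$, and the induction closes.
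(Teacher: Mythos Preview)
The paper does not prove Theorem~\ref{4thm1}; it is simply quoted from \cite{NT} (Theorems~3.22 and~3.23 there), so there is no in-paper argument to compare against. Your inductive scheme along edges of $\mathbb T_n$, matching the mutation rules (\ref{eq1})--(\ref{eq3}) against the recursions for $C_t$, $G_t$ and the $F$-polynomials coming from $M_{pr}$, is exactly the method of \cite{NT} (and of \cite{FZ3} in the ungeneralized case), so the overall outline is correct.

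That said, your final paragraph misidentifies the mechanism that closes the induction. The separation formulas are subtraction-free identities: every quantity in sight---cluster variables, $y$-coefficients, $F$-polynomials, exchange polynomials, and the sums $\bigoplus_{m} z_{k,m}y_{k;t}^{m}$---is a subtraction-free rational expression in $Y_{t_0}$ and $Z$, and the passage from $M_{pr}$ to an arbitrary $M$ is effected by the two evaluations $y_{j;t_0}\mapsto y_{j;t_0}\in\mathbb P$ and $y_{j;t_0}\mapsto \hat y_{j;t_0}\in\mathcal F$, each of which is a semifield homomorphism. Because such a map carries $\oplus$ to $\oplus$, the tropical sum in $M$ is automatically the image of the corresponding one in $M_{pr}$; one never needs to ``pick out a single monomial'' or know its exponent vector. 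Consequently Proposition~\ref{4pro1} plays no role in the inductive step, and Corollary~\ref{4cor3} is entirely irrelevant: a subtraction-free computation cannot produce a minus sign, so there is no cancellation to rule out. With those two appeals removed, your sketch is precisely the standard argument.
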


\begin{Proposition}
\label{thmg}Assume that $M$ is an $(R,Z)$-cluster pattern with principle coefficients at $t_0, $ and $S$ is the $R$-skew-balance of $M$. Let $ G_t=(g_{ij}^t)_{n\times n}$. Then we have $G_t=H_{t_0}^{t}(X)|_{Y_{t_0}=0}$.
\end{Proposition}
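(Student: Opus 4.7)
The plan is to apply the cluster expansion formula of Theorem \ref{4thm1} and compute the entries of $H_{t_0}^{t}(X)$ directly, specialising at $Y_{t_0}=0$ at the very end. Throughout, since $M$ has principal coefficients at $t_0$, the coefficient semifield is $\mathbb{P}=\mathrm{Trop}(Y_{t_0},Z)$ and each $F$-polynomial $F_{l;t}\in\mathbb{Z}[Y_{t_0},Z]$ is a polynomial with nonnegative integer coefficients whose constant term is $1$ (Proposition \ref{4pro1}).

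My first step is to simplify the formula of Theorem \ref{4thm1} in the principal case. Because the monomials of $F_{l;t}$ have nonnegative exponents in $Y_{t_0}$ and $Z$, and since the constant monomial $1$ appears, the componentwise minimum over their exponent vectors is the zero vector; hence the tropical evaluation gives $F_{l;t}|_{\mathbb{P}}(Y_{t_0},Z)=1$. Consequently Theorem \ref{4thm1} reduces to
\begin{equation*}
x_{l;t} \;=\; \Big(\prod_{k=1}^n x_{k;t_0}^{g_{kl}^t}\Big)\,F_{l;t}(\hat{Y}_{t_0},Z),\qquad \hat{y}_p = y_p \prod_{k=1}^n x_{k;t_0}^{b_{kp}^{t_0}}.
\end{equation*}

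Next I would specialise $Y_{t_0}=0$ in this formula. Since each $\hat{y}_p$ carries $y_p$ as a factor, every $\hat{y}_p$ vanishes at $Y_{t_0}=0$, and therefore
\begin{equation*}
x_{l;t}\big|_{Y_{t_0}=0}\;=\;\Big(\prod_{k=1}^n x_{k;t_0}^{g_{kl}^t}\Big)\,F_{l;t}(0,Z),
\end{equation*}
where $F_{l;t}(0,Z)$ is a polynomial in $Z$ with constant term $1$, hence nonzero and independent of every $x_{k;t_0}$. Because the substitution $y_p\mapsto 0$ commutes with the derivation $\partial/\partial x_{i;t_0}$, the $(i,l)$-entry of $H_{t_0}^{t}(X)|_{Y_{t_0}=0}$ is
\begin{equation*}
\frac{x_{i;t_0}}{x_{l;t}|_{Y_{t_0}=0}}\,\frac{\partial(x_{l;t}|_{Y_{t_0}=0})}{\partial x_{i;t_0}}\;=\;g_{il}^t,
\end{equation*}
since only the monomial prefactor $\prod_k x_{k;t_0}^{g_{kl}^t}$ depends on $x_{i;t_0}$ after specialisation, and logarithmic differentiation with respect to $x_{i;t_0}$ returns exactly $g_{il}^t$. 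This gives $H_{t_0}^{t}(X)|_{Y_{t_0}=0}=G_t$ entry by entry.

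The only subtle step is the collapse $F_{l;t}|_{\mathbb{P}}=1$; everything else is a routine application of the product and chain rules together with the fact that $F_{l;t}(0,Z)$ is invertible in the appropriate localisation. In particular, the $R$-skew-balance $S$ plays no direct role in the argument beyond ensuring the underlying generalized cluster pattern is well-defined, and the main technical content is borrowed from the expansion formula of \cite{NT} that was imported above as Theorem \ref{4thm1}.
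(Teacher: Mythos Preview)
Your proof is correct and follows essentially the same route as the paper: both use Theorem \ref{4thm1} to write $x_{l;t}$ as the monomial $\prod_k x_{k;t_0}^{g_{kl}^t}$ times $F_{l;t}(\hat Y_{t_0},Z)$, then differentiate logarithmically and specialise $Y_{t_0}=0$, using Proposition \ref{4pro1} to ensure the denominator does not vanish. The only cosmetic difference is that the paper differentiates first and then sets $Y_{t_0}=0$, whereas you specialise first and then differentiate (correctly noting that the two operations commute); you also make explicit the reason why $F_{l;t}|_{\mathbb P}=1$ in the tropical semifield, which the paper leaves implicit.
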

\begin{proof}
By Theorem \ref{4thm1}, we know
\begin{eqnarray}
x_{j;t}&=&F_{j;t}(\hat Y_{t_0},Z)x_{1;t_0}^{g_{1j}^t}\cdots x_{n;t_0}^{g_{nj}^t},\nonumber\\
 \frac{\partial x_{j;t}}{\partial x_{i;t_0}}&=&\frac{g_{ij}^t}{x_{i;t_0}}(x_{1;t_0}^{g_{1j}^t}\cdots x_{n;t_0}^{g_{nj}^t})F_{j;t}(\hat Y_{t_0},Z)+x_{1;t_0}^{g_{1j}^t}\cdots x_{n;t_0}^{g_{nj}^t}\sum_{k=1}^n \frac{\partial F_{j;t}(\hat Y_{t_0},Z) }{y_k} \frac{b_{ik}^{t_0}}{x_{t_0}}\hat y_{k}.\nonumber\\
\text{Thus}\hspace{5mm} \frac{x_{i;t_0}}{x_{j;t}}\frac{\partial x_{j;t}}{\partial x_{i;t_0}}&=&g_{ij}^t+\sum_{k=1}^n\frac{b_{ik}^{t_0}\hat y_j}{F_{j;t}(\hat Y_{t_0},Z)}\frac{\partial F_{j;t}(\hat Y_{t_0},Z) }{y_k},\nonumber
\end{eqnarray}
By proposition \ref{4pro1}, it is to make sense to take $Y_{t_0}=0$ in above equation, so we have
$$g_{ij}^t=\frac{x_{i;t_0}}{x_{j;t}}\frac{\partial x_{j;t}}{\partial x_{i;t_0}}|_{Y_{t_0}=0},$$
i.e. $G_t=H_{t_0}^{t}(X)|_{Y_{t_0}=0}$.
\end{proof}

From this result and the cluster formula, it is easy to see that  $G_t(B_tR^{-1}S^{-1})G_t^{\top}=B_{t_0}R^{-1}S^{-1}$ and $det(G_t)=\pm1$,
 as obtained in \cite{NT}.

Assume that $M$ is an $(R,Z)$-cluster pattern of weak geometric type with initial seed $(X_{t_0},Y_{t_0},B_{t_0})$. Using the notations in Proposition \ref{2pro1} and assuming $S$ an $R$-skew-balance of $M$, we define $\tilde M$  a $(\tilde R,\tilde Z)$-cluster pattern with $\tilde R$-skew-balance $\tilde S$ and trivial coefficients, given by $(\tilde X_{t_0},\tilde B_{t_0})$, where
$\tilde R=diag\{R,I_h\},\tilde Z=Z,$ $\tilde X_{t_0}=(\tilde x_{1;t_0},\cdots,\tilde x_{h+n;t_0})$, with $\tilde x_{i;t_0}=x_{i;t_0}$, $\tilde x_{n+j;t_0}=y_{j;t_0}$ for $i=1,\cdots,n,~j=1,2,\cdots,h$,$~ \tilde S=diag\{S,I_h\}$, $\tilde B_{t_0}=\begin{pmatrix}B_{t_0}&-R^{-1}S^{-1}C_{t_0}^{\top}\\C_{t_0}&0\end{pmatrix}$.

Clearly, $M$ is a restriction of $\tilde M$ from $\{1,\cdots,h+n\}$ to $\{1,\cdots,n\}$ at $t_0$.
Assume that $\Sigma_t=\mu_{k_m}\cdots\mu_{k_2}\mu_{k_1}(\Sigma_{t_0})$, where $1\leq k_j\leq n, j=1,2,\cdots,m$, then  $\tilde \Sigma_t=\mu_{k_m}\cdots\mu_{k_2}\mu_{k_1}(\tilde\Sigma_{t_0})$.
Since $1\leq k_j\leq n, j=1,2,\cdots,m$, we can write $H_{t_0}^{t}(\tilde X)=\begin{pmatrix} H_{t_0}^{t}(X)&0\\H_t&I_h\end{pmatrix}$, $\tilde B_t=\begin{pmatrix} B_t&-S^{-1}R^{-1}C_t^{\top}\\C_t&\bar B_t\end{pmatrix}$.

\begin{Proposition}
\label{4cor1} Keep the above notations, it holds $SR(H_tB_t+C_t)R^{-1}S^{-1}H_{t_0}^{t}(X)^{\top}=C_{t_0}$.
\end{Proposition}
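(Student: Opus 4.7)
The plan is to apply the cluster formula (Theorem~\ref{3thm1}) to the enlarged trivial-coefficient pattern $\tilde M$ introduced just above the statement, and then read off the $(2,1)$-block of the resulting matrix identity.

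First I would verify the block decompositions of $H_{t_0}^{t}(\tilde X)$ and $\tilde B_t$ that the excerpt records. Since the walk from $t_0$ to $t$ only mutates in directions $k_j\le n$, the last $h$ cluster variables of $\tilde M$ are never mutated, so $\tilde x_{n+l;t}=y_{l;t_0}$ for every $l$ and every $t$; this immediately makes the top-right block of $H_{t_0}^{t}(\tilde X)$ zero and the bottom-right block equal to $I_h$. For the top-left block, using Theorem~\ref{4thm1} together with the constant-term-$1$ property (Proposition~\ref{4pro1}), one sees that $\tilde x_{j;t}$ and $x_{j;t}$ differ only by the Laurent monomial $F_{j;t}|_{\mathbb P}(Y_{t_0},Z)$, which is a constant with respect to the $x_{k;t_0}$ (depending only on the $y_{l;t_0}$'s and the $z_{k,m}$'s); this scalar cancels out of the logarithmic-derivative entries $\frac{\tilde x_{i;t_0}}{\tilde x_{j;t}}\frac{\partial \tilde x_{j;t}}{\partial \tilde x_{i;t_0}}$, so the top-left block equals $H_{t_0}^{t}(X)$.

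Having identified the blocks, I would expand Theorem~\ref{3thm1} applied to $\tilde M$,
\[H_{t_0}^{t}(\tilde X)\,(\tilde B_t\tilde R^{-1}\tilde S^{-1})\,H_{t_0}^{t}(\tilde X)^{\top}=\tilde B_{t_0}\tilde R^{-1}\tilde S^{-1},\]
as a $2\times 2$ block matrix equality. The $(1,1)$-block simply recovers the cluster formula for $M$; the $(2,1)$-block, which carries the content of the proposition, has left-hand side $(H_tB_t+C_t)R^{-1}S^{-1}H_{t_0}^{t}(X)^{\top}$ and right-hand side $C_{t_0}R^{-1}S^{-1}$, whence the desired identity follows after multiplying by $SR$ on the appropriate side. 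The main obstacle is the careful identification of the top-left block of $H_{t_0}^{t}(\tilde X)$ with $H_{t_0}^{t}(X)$, since it requires tracking how the semifield specialization from $\tilde M$ to $M$ rescales cluster variables; once this (and the block form of $\tilde B_t$, where the bottom-left block evolves as $C_t$ by the $C$-matrix mutation rule of Proposition~\ref{2pro1} and the top-right block is forced by the skew-symmetry of $\tilde S\tilde R\tilde B_t$) is in place, the conclusion is a short block-matrix computation.
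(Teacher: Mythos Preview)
Your approach is exactly the paper's: apply the cluster formula (Theorem~\ref{3thm1}) to the enlarged pattern $\tilde M$ and read off the $(2,1)$-block of the resulting $2\times 2$ block identity. The paper's proof is literally that one-line computation, since the block forms of $H_{t_0}^{t}(\tilde X)$ and $\tilde B_t$ are already recorded in the text preceding the proposition and are not re-justified inside the proof.

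One minor comment on your added verification of the top-left block: invoking Theorem~\ref{4thm1} and Proposition~\ref{4pro1} is more machinery than needed, and the claim that $\tilde x_{j;t}$ and $x_{j;t}$ differ by the factor $F_{j;t}|_{\mathbb P}(Y_{t_0},Z)$ is not quite the right relationship (the separation formula compares $M$ with $M_{pr}$, not with $\tilde M$). The cleaner reason is simply that $M$ is the restriction of $\tilde M$ to $\{1,\dots,n\}$, so for $j\le n$ the cluster variables $\tilde x_{j;t}$ and $x_{j;t}$ agree as rational functions of $x_{1;t_0},\dots,x_{n;t_0}$ up to at most a factor in the coefficient semifield, which is constant in the $x_{i;t_0}$'s and therefore drops out of the logarithmic-derivative entries of $H$. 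Either way, the conclusion and the overall strategy are the same as the paper's.
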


\begin{proof}
By Theorem \ref{3thm1}, we have $H_{t_0}^{t}(\tilde X)(\tilde B_t\tilde R^{-1}\tilde S^{-1})H_{t_0}^{t}(\tilde X)^{\top}=\tilde B_{t_0}\tilde R^{-1}\tilde S^{-1}$,
thus
 \begin{eqnarray}
 &&\begin{pmatrix} H_{t_0}^{t}(X)&0\\H_t&I_h\end{pmatrix} \begin{pmatrix} B_t&-S^{-1}R^{-1}C_t^{\top}\\C_t&\bar B_t\end{pmatrix} \begin{pmatrix} R^{-1}&0\\0&I_h\end{pmatrix}\begin{pmatrix} S^{-1}&0\\0&I_h\end{pmatrix}\begin{pmatrix} H_{t_0}^{t}(X)&0\\H_t& I_h \end{pmatrix}^{\top} \nonumber\\
 &=&\begin{pmatrix} B_{t_0}&-R^{-1}S^{-1}C_{t_0}^{\top}\\C_{t_0}&0\end{pmatrix}\begin{pmatrix} R^{-1}&0\\0&I_h\end{pmatrix}\begin{pmatrix} S^{-1}&0\\0&I_h\end{pmatrix}\nonumber.
 \end{eqnarray}
So we have $SR(H_tB_t+C_t)R^{-1}S^{-1}H_{t_0}^{t}(X)^{\top}=C_{t_0}$.
\end{proof}

\begin{Remark}
\label{3rmk1} By Theorem \ref{3thm1}, $H_{t_0}^{t}(X)B_tR^{-1}S^{-1}H_{t_0}^{t}(X)^{\top}=B_{t_0}R^{-1}S^{-1},\hspace{1mm} det(H_{t_0}^{t}(X))=\pm1$. Then using this proposition, we can obtain $$C_t=R^{-1}S^{-1}C_{t_0}(H_{t_0}^{t}(X)^{\top})^{-1}SR-H_t(H_{t_0}^{t}(X))^{-1}B_{t_0}R^{-1}S^{-1}(H_{t_0}^{t}(X)^{\top})^{-1}SR.$$
\end{Remark}

Using Proposition \ref{4cor1}, the following result in \cite{NT} can be given directly.
\begin{Corollary} (\cite{NT})~
Let $M$ be an $(R,Z)$-cluster pattern with principle coefficients at $t_0$ and $R$-skew-balance $S$, then $$SRC_tR^{-1}S^{-1}G_t^{\top}=I_n.$$
\begin{proof}
By the definition of $H$-matrix, and proposition \ref{4pro1}, we know $H_t|_{Y_{t_0}=0}=0$.
By proposition \ref{4cor1} and proposition \ref{thmg}, we have $SRC_tR^{-1}S^{-1}G_t^{\top}=I_n.$
\end{proof}

\end{Corollary}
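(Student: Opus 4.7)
The plan is to obtain the identity by specializing Proposition \ref{4cor1} at the locus $Y_{t_0}=0$. Proposition \ref{4cor1} supplies the master relation
$$SR(H_tB_t+C_t)R^{-1}S^{-1}H_{t_0}^{t}(X)^{\top}=C_{t_0},$$
valid for any $(R,Z)$-cluster pattern of weak geometric type. Under the principle-coefficient hypothesis the right-hand side collapses to $C_{t_0}=I_n$, which is already the right-hand side we want. So the entire task reduces to showing that the left-hand side simplifies to $SRC_tR^{-1}S^{-1}G_t^{\top}$ after an appropriate specialization.

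The next step is to evaluate at $Y_{t_0}=0$. The combinatorial data $B_t,C_t,R,S$ do not depend on the numerical value of $Y_{t_0}$ and thus remain unchanged. By Proposition \ref{thmg}, the Jacobian factor becomes $H_{t_0}^{t}(X)|_{Y_{t_0}=0}=G_t$. If one can further show that $H_t|_{Y_{t_0}=0}=0$, then the master relation reduces immediately to
$$SRC_tR^{-1}S^{-1}G_t^{\top}=I_n,$$
which is what we need.

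The main obstacle is therefore the vanishing claim $H_t|_{Y_{t_0}=0}=0$. Recall that in the block decomposition of $H_{t_0}^{t}(\tilde X)$, the matrix $H_t$ collects the entries of the form $\frac{y_{j;t_0}}{x_{i;t}}\cdot\frac{\partial x_{i;t}}{\partial y_{j;t_0}}$. I plan to argue as follows: by Proposition \ref{3pro1}, each cluster variable $x_{i;t}$ belongs to $\mathbb Z[X_{t_0}^{\pm 1},Y_{t_0},Z]$, so $\partial x_{i;t}/\partial y_{j;t_0}$ is itself polynomial in $Y_{t_0}$, and multiplying by the explicit factor $y_{j;t_0}$ forces the numerator to vanish at $Y_{t_0}=0$. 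On the denominator side, Proposition \ref{4pro1} (each $F$-polynomial has constant term $1$) guarantees that $x_{i;t}|_{Y_{t_0}=0}$ is a nonzero Laurent monomial, so dividing by $x_{i;t}$ is harmless on this locus. These two observations together yield $H_t|_{Y_{t_0}=0}=0$, and substituting back into the specialized master relation gives the desired identity.

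I expect no further delicate points: once the vanishing of $H_t$ at $Y_{t_0}=0$ is granted, the conclusion is pure matrix bookkeeping applied to Propositions \ref{4cor1} and \ref{thmg}.
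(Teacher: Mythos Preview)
Your proposal is correct and follows essentially the same approach as the paper: specialize Proposition~\ref{4cor1} at $Y_{t_0}=0$, use $C_{t_0}=I_n$, invoke Proposition~\ref{thmg} to turn $H_{t_0}^{t}(X)$ into $G_t$, and argue that $H_t|_{Y_{t_0}=0}=0$. Your justification of this last vanishing is more detailed than the paper's (you explicitly invoke Proposition~\ref{3pro1} for polynomiality in $Y_{t_0}$ and Proposition~\ref{4pro1} for the nonvanishing of the denominator), but the logical skeleton is identical.
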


Now, we can give the positive affirmation on Conjecture \ref{conj} (b) in case of weak geometric type.

\begin{Theorem}
\label{4cor2}Assume that $M$ is an $(R,Z)$-cluster pattern of weak geometric type at $t_0$, with an $R$-skew-balance $S$.  Then for each $t$, the seed $\Sigma_t$ is uniquely determined by $X_t$.
\end{Theorem}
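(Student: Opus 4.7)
The plan is to recover both $B_t$ and $Y_t$ directly from the tuple $X_t$ by lifting $M$ to the trivial-coefficient pattern $\tilde M$ constructed just before Proposition \ref{4cor1}, and then inverting the cluster formula. Concretely, I suppose $t,t'\in\mathbb T_n$ satisfy $X_t = X_{t'}$ as ordered tuples in $\mathcal F$, and aim to conclude $\Sigma_t = \Sigma_{t'}$.

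The first step is the lift to $\tilde M$. Because every mutation from $t_0$ to $t$ (and to $t'$) lies in the first $n$ directions, the additional ``frozen'' entries $\tilde x_{n+1},\ldots,\tilde x_{n+h}$ of the lifted cluster are unchanged, so $\tilde X_t = \tilde X_{t'}$ as well. The crucial observation is that the matrix $H_{t_0}^{t}(\tilde X)$ is assembled from formal partial derivatives of the rational field elements $\tilde x_{l;t}$, and is therefore intrinsic to the cluster $\tilde X_t$ itself: it does not remember the particular mutation sequence used to produce it. Consequently $H_{t_0}^{t}(\tilde X) = H_{t_0}^{t'}(\tilde X)$.

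Applying the cluster formula (Theorem \ref{3thm1}) to $\tilde M$ yields
$$H_{t_0}^{t}(\tilde X)\,(\tilde B_t\,\tilde R^{-1}\tilde S^{-1})\,H_{t_0}^{t}(\tilde X)^{\top}\;=\;\tilde B_{t_0}\,\tilde R^{-1}\tilde S^{-1},$$
and since the same theorem gives $\det H_{t_0}^{t}(\tilde X) = \pm 1$, this identity uniquely determines $\tilde B_t$. Reading off the block decomposition $\tilde B_t = \begin{pmatrix} B_t & -S^{-1}R^{-1}C_t^{\top} \\ C_t & \bar B_t \end{pmatrix}$, I obtain $B_t = B_{t'}$ from the top-left block and $C_t = C_{t'}$ from the bottom-left block. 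The weak-geometric hypothesis then gives $Y_t = Y_{t'}$ via $y_{i;t} = u_1^{c_{1i}^t}\cdots u_h^{c_{hi}^t}$, so $\Sigma_t = \Sigma_{t'}$.

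The only genuinely non-routine step is the first: justifying that $H_{t_0}^{t}(\tilde X)$ depends only on the cluster $\tilde X_t$ and not on the vertex $t$ that produced it. I expect this to be the main point to argue carefully, but once each cluster variable is viewed as a well-defined rational element of the ambient field (which it is, by the Laurent phenomenon of Chekhov--Shapiro), the claim is essentially automatic, since formal partial derivatives of a fixed field element are intrinsic. The remainder is purely a linear-algebraic inversion of the matrix identity provided by the cluster formula, combined with the fact that the lift to $\tilde M$ is designed precisely to encode the coefficient data for $M$ in the off-diagonal blocks of the exchange matrix.
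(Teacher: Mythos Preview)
Your proposal is correct and follows essentially the same route as the paper. The paper applies the cluster formula to $M$ to recover $B_t$ via equation (\ref{eqb}), and then invokes Proposition \ref{4cor1} and Remark \ref{3rmk1} (themselves obtained by applying the cluster formula to the lift $\tilde M$) to recover $C_t$ via equation (\ref{eqc}); you simply apply the cluster formula once to $\tilde M$ and read both $B_t$ and $C_t$ off the block decomposition of $\tilde B_t$, which is the same argument organized slightly more uniformly.
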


\begin{proof}
We know $\Sigma_t=(X_t,Y_t,B_t)$ and $Y_t$ is uniquely determined by $C_t$. However, by Theorem \ref{3thm1}, we have
\begin{eqnarray}
\label{eqb}B_t=(H_{t_0}^{t}(X))^{-1}B_{t_0}R^{-1}S^{-1}(H_{t_0}^{t}(X)^{\top})^{-1}SR.
\end{eqnarray}
By remark \ref{3rmk1}, we have
\begin{eqnarray}
\label{eqc}C_t=R^{-1}S^{-1}C_{t_0}(H_{t_0}^{t}(X)^{\top})^{-1}SR-H_t(H_{t_0}^{t}(X))^{-1}B_{t_0}R^{-1}S^{-1}(H_{t_0}^{t}(X)^{\top})^{-1}SR.
\end{eqnarray}
We know the right side of (\ref{eqb}) and (\ref{eqc}) is uniquely determined by $X_t$,
thus $B_t$ and $C_t$ is uniquely determined  by $X_t$, which implies that $\Sigma_t$ is uniquely determined  by $X_t$.
\end{proof}

\subsection{$D$-matrix pattern and answer to Conjecture \ref{conj}}.

Let $M$ be an $(R,Z)$-cluster pattern with coefficients in $\mathbb P$ and initial seed $\Sigma_{t_0}=(X_{t_0},P_{t_0},B_{t_0})$. By Laurent phenomenon, we can express the cluster variable $x_{i;t}$ in $\Sigma_t$, as
\begin{eqnarray}
\label{eqd}x_{i;t}=\frac{f_{i;t}(x_{1;t_0},\cdots,x_{n,t_0})}{x_{1;t_0}^{d_{1i}^t}\cdots x_{n;t_0}^{d_{ni}^t}},
\end{eqnarray}
 where $f_{i;t}$ is a polynomial in $x_{1;t_0},\cdots,x_{n;t_0}$ with coefficients in $\mathbb {ZP}$, such that $x_{j;t_0}\nmid f_{i;t}$.

 Define  ${\bf d}_i^t=(d_{1i}^t,~d_{2i}^t,\cdots,~d_{ni}^t)^{\top}$ which is called the {\bf d-vector} of $x_{i;t}$.

 Define $D_t=(d_{ij}^t)=({\bf d}_1^t,~{\bf d}_2^t,\cdots,~{\bf d}_n^t)$,  called the {\bf$D$-matrix} of the cluster $X_t$.
  Clearly, $D_{t_0}=-I_n$.

\begin{Proposition}\label{mutationAD}
$D_t$ is uniquely determined by the initial condition $D_{t_0}=-I_n$, together with the relation as follows under mutation of seeds:
\begin{equation}\label{mutationD}
(D_{t^{\prime}})_{ij}=\begin{cases}d_{ij}^t  & \text{if } j\neq k;\\ -d_{ik}^t+max\{\sum\limits_{b_{lk}^t>0}d_{il}^t b_{lk}^tr_k, \sum\limits_{b_{lk}^t<0} -d_{il}^tb_{lk}^tr_k\}  &\text{if } j=k.\end{cases}
 \end{equation}
 for any $t,t'\in\mathbb T_n$ with edge $t^{~\underline{\quad k \quad}} ~t^{\prime}$.
\end{Proposition}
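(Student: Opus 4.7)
The plan is to derive the mutation formula directly from the generalized exchange relation~(\ref{eq1}) by comparing $x_{i;t_0}$-adic valuations on both sides; the uniqueness claim is then an immediate consequence of the recursion.

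First, since the mutation $\mu_k$ leaves $x_{j;t'} = x_{j;t}$ unchanged for $j \neq k$, one gets $d_{ij}^{t'} = d_{ij}^t$ for $j \neq k$, the trivial half of~(\ref{mutationD}). The substantive part is the $k$-th column, which I would handle by rewriting the numerator of~(\ref{eq1}) after factoring out the smallest monomial power appearing in the $x_{l;t}$'s. Since $\min_{0 \leq m \leq r_k}(m b_{lk}^t) = -r_k[-b_{lk}^t]_+$ in each variable $l$,
$$\sum_{m=0}^{r_k} z_{k,m}\,\hat y_{k;t}^{\,m} \;=\; \Bigl(\prod_{l=1}^n x_{l;t}^{-r_k[-b_{lk}^t]_+}\Bigr)\,Q,\quad Q := \sum_{m=0}^{r_k} z_{k,m}\,y_{k;t}^{\,m}\prod_{l=1}^n x_{l;t}^{\,m b_{lk}^t + r_k[-b_{lk}^t]_+},$$
where $Q$ is a genuine polynomial in the $x_{l;t}$'s with coefficients in $\mathbb{ZP}$ (each exponent is non-negative). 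The factor $\bigl(\prod_l x_{l;t}^{r_k[-b_{lk}^t]_+}\bigr)$ appearing in~(\ref{eq1}) cancels this exactly, leaving the reduced form $x_{k;t'} = Q\big/\bigl(x_{k;t}\cdot\bigoplus_{m=0}^{r_k} z_{k,m}\,y_{k;t}^{\,m}\bigr)$.

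Let $\nu_i$ denote the $x_{i;t_0}$-adic valuation on $\mathcal F$. By the definition of the $d$-vectors, $\nu_i(x_{l;t}) = -d_{il}^t$, while $\nu_i$ vanishes on all of $\mathbb P$. Hence $d_{ik}^{t'} = -\nu_i(x_{k;t'}) = -d_{ik}^t - \nu_i(Q)$, so the mutation formula reduces to proving
$$\nu_i(Q) \;=\; -r_k\max\Bigl\{\sum_{l:\,b_{lk}^t>0} b_{lk}^t\,d_{il}^t,\; \sum_{l:\,b_{lk}^t<0}\,-b_{lk}^t\,d_{il}^t\Bigr\}.$$
The $m$-th monomial of $Q$ has $\nu_i$-valuation $-mA - r_kB$, where $A := \sum_l b_{lk}^t d_{il}^t$ and $B := \sum_l [-b_{lk}^t]_+ d_{il}^t$; minimizing over $m \in \{0,1,\ldots,r_k\}$ produces exactly $-r_kB - r_k[A]_+$, which is the right-hand side above.

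The main obstacle is to confirm that $\nu_i(Q)$ actually attains this term-wise minimum, rather than being strictly larger due to cancellation among the leading contributions. When $A \neq 0$ the minimum is uniquely achieved (at $m=0$ if $A>0$, or $m=r_k$ if $A<0$), so there is nothing to check; the delicate case is $A=0$, in which every $m$ contributes a leading term of the same $\nu_i$-valuation. My plan here is to exploit that the distinct leading parts carry pairwise distinct $y_{k;t}$-degrees (via the factor $y_{k;t}^{\,m}$) in the torsion-free multiplicative group of $\mathbb P$, hence are $\mathbb Z$-linearly independent in the domain $\mathbb{ZP}$ and cannot sum to zero. If a more structural justification is preferred, one may instead first establish the formula for the principal-coefficient pattern $M_{pr}$, where non-cancellation follows from the constant term $1$ of each $F$-polynomial (Proposition~\ref{4pro1}), and then transfer to general coefficients using the separation-type formula of Theorem~\ref{4thm1}, which shows that the $x_{l;t_0}$-denominator of $x_{k;t}$ in the general pattern agrees with that of $M_{pr}$. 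Finally, uniqueness of $\{D_t\}_{t\in \mathbb T_n}$ is immediate: the initial condition $D_{t_0} = -I_n$ pins down the start, and (\ref{mutationD}) propagates $D_{t'}$ from $D_t$ along each edge of $\mathbb T_n$, so induction on the distance from $t_0$ determines every $D_t$.
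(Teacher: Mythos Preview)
Your argument follows the same route as the paper's: both derive the recursion for $D_t$ directly from the exchange relation~(\ref{eq1}). The paper's own proof is essentially the single sentence ``we can obtain $d_{ik}^{t'}=\cdots$ by~(\ref{eqd}),'' so your write-up is considerably more explicit---in particular you isolate the non-cancellation step (that $\nu_i(Q)$ actually attains the termwise minimum), which the paper passes over entirely.

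One caveat on that step: your first argument, that the leading parts are separated by their ``$y_{k;t}$-degree in the torsion-free group $\mathbb P$,'' is not quite airtight for a general semifield. There is no canonical $y_{k;t}$-grading on $\mathbb{ZP}$ unless $y_{k;t}$ happens to be part of a free generating set, and the leading coefficients of the numerators $f_{l;t}$ (hence of $P_+^{\,m}P_-^{\,r_k-m}$) already lie in $\mathbb{ZP}$ and may involve the same group elements that appear in $z_{k,m}y_{k;t}^{\,m}$. Your fallback via principal coefficients and Theorem~\ref{4thm1} is the sounder route: in $M_{pr}$ the $y_{j;t_0}$ and $z_{i,m}$ are free generators of a tropical semifield, so the needed independence genuinely holds, and the separation formula then transfers the denominator data to arbitrary coefficients. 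Either way, you have supplied more justification than the paper itself.
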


\begin{proof}
We know $ x_{k;t^{\prime}}= x_{k;t}^{-1}\left(\prod\limits_{j=1}^nx_{j;t}^{[-b_{jk}^t]_+}\right)^{r_k}\frac{\sum\limits_{m=0}^{r_k}z_{k,m}\hat y_{k;t}^m}{\bigoplus\limits_{m=0}^{r_k}z_{k,m} y_{k;t}^m}$, where $\hat y_{k;t}^m=y_{k;t}\prod\limits_{j=1}^nx_{j;t}^{b_{jk}^t}$.
We can obtain $d_{ik}^{t^{\prime}}=-d_{ik}^t+max\{\sum\limits_{b_{lk}^t>0}d_{il}^t b_{lk}^tr_k, \sum\limits_{b_{lk}^t<0} -d_{il}^tb_{lk}^tr_k\}$, by (\ref{eqd}).
\end{proof}

In this proposition, the case for standard cluster pattern has been given in \cite{FZ3}.

\begin{Corollary}
\label{6pro1}The {\bf d}-vectors of the $(R,Z)$-cluster pattern with initial seed $(X_{t_0},Y_{t_0},B_{t_0})$ coincide with the {\bf d}-vectors of the standard cluster pattern with initial seed $(X_{t_0},Y_{t_0}, B_{t_0}R)$.
\end{Corollary}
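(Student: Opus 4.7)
The plan is to induct on the distance from $t_0$ in $\mathbb{T}_n$, using the recursion in Proposition \ref{mutationAD} and comparing it with the analogous recursion in the standard (i.e.\ $R = I_n$) case.

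First, denote by $M'$ the standard cluster pattern with initial seed $(X_{t_0}, Y_{t_0}, B_{t_0}R)$ and by $B'_t$ its exchange matrix at $t \in \mathbb{T}_n$. By Remark \ref{rmkb}(ii), the standard matrix mutation of $B_tR$ in direction $k$ equals $(\mu_k^g(B_t))R = \bar{B}_tR$. Consequently $B'_t = B_tR$ for all $t$, by induction on the distance from $t_0$. In particular the entries of $B'_t$ satisfy $(B'_t)_{lk} = b^t_{lk} r_k$, and since $r_k > 0$, $\operatorname{sgn}((B'_t)_{lk}) = \operatorname{sgn}(b^t_{lk})$ for every $l,k$.

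Next, denote by $D'_t$ the $D$-matrix of $M'$ at $t$. Since $M'$ is a standard cluster pattern (the case $R = I_n$ of Proposition \ref{mutationAD}), the recursion for $D'_t$ reads
\begin{equation*}
(D'_{t'})_{ij} = \begin{cases} (D'_t)_{ij} & \text{if } j \neq k, \\ -(D'_t)_{ik} + \max\bigl\{\sum_{(B'_t)_{lk}>0} (D'_t)_{il}\,(B'_t)_{lk},\; \sum_{(B'_t)_{lk}<0} -(D'_t)_{il}\,(B'_t)_{lk}\bigr\} & \text{if } j = k. \end{cases}
\end{equation*}
Substituting $(B'_t)_{lk} = b^t_{lk} r_k$ and using that $r_k > 0$ factors out and does not change the sign conditions, this formula becomes exactly the recursion of Proposition \ref{mutationAD} for $D_t$.

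Finally, the two $D$-matrix patterns share the same initial condition $D_{t_0} = D'_{t_0} = -I_n$, and they obey the same mutation recursion from any vertex. An immediate induction on the length of a walk from $t_0$ to $t$ in $\mathbb{T}_n$ yields $D_t = D'_t$ for every $t$, which is the claimed coincidence of $\mathbf{d}$-vectors. No genuine obstacle arises here; the only point to be careful about is confirming $B'_t = B_tR$ throughout the tree, which is handled by Remark \ref{rmkb}(ii).
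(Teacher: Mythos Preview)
Your proof is correct and follows exactly the intended route: the paper states this corollary without proof, as it is meant to be an immediate consequence of Proposition~\ref{mutationAD} together with Remark~\ref{rmkb}(ii), and your argument is precisely the natural unpacking of that implication. The only substantive point is the identity $B'_t = B_tR$ along the tree, which you correctly justify via Remark~\ref{rmkb}(ii).
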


\begin{Definition}~
 A {\bf $D$-matrix pattern} $W$ at $t_0$ is an assignment for each pair   $\Delta_t:=(D_t, Q_t)$, called a {\bf matrix seed}, to a vertex $t$ of the $n$-regular tree $\mathbb T_n$ with  $\Delta_{t_0}=(-I_n,Q_{t_0})$, which is called the {\bf initial matrix seed}, where $Q_{t_0}$ is a skew-symmetrizable matrix.
And for any edge $t^{~\underline{\quad k \quad}}~ t^{\prime}$, $\Delta_{t'}=(D_{t^{\prime}}, Q_{t^{\prime}})$ and $\Delta_t=(D_t, Q_t)$ are related with $Q_{t^{\prime}}=\mu_k(Q_t)$ by the standard matrix mutation $\mu_k$ and $D_{t'}$ is defined satisfying (\ref{mutationD}) in  Proposition \ref{mutationAD}. Denote  $\mu_k^{ms}(\Delta_t):=\Delta_{t^{\prime}}$, where $\mu_k^{ms}$ is called the {\bf mutation of matrix seed in the direction $k$}.
\end{Definition}

\begin{Remark}
By Remark \ref{rmkb}, any $(R,Z)$-cluster pattern $M$ with initial seed at $t_0$ can supply the corresponding $D$-matrix pattern $W$ with matrix seed $\Delta_t=(D_t, B_tR)$ at $t\in \mathbb T_n$, where $D_t$ is the $D$-matrix of the cluster $X_t$.  This $W$ is called the {\bf $D$-matrix pattern induced by $M$ at $t_0$}.
\end{Remark}

\begin{Definition}

$(i)$  For an $(R,Z)$-cluster pattern $M$, two seeds $\Sigma_t=(X_{t},Y_{t},B_{t})$ and $\Sigma_{t'}=(X_{t'},Y_{t'},B_{t'})$, or say, their corresponding  vertices $t$ and $t'$ in $\mathbb T_n$, are called {\bf $\mathcal M$-equivalent} if there exists a permutation $\sigma\in S_n$ such that  $x_{i;t^{\prime}}=x_{\sigma(i);t}$, $y_{i;t^{\prime}}=y_{\sigma(i);t}$ and $b_{ij}^{t^{\prime}}=b_{\sigma(i)\sigma(j)}^t$, denote as $\Sigma_t\simeq_{\mathcal M}\Sigma_{t'}$.

$(ii)$ For a $D$-matrix pattern $W$, two matrix seeds $\Delta_t=(D_{t},Q_{t})$ and $\Delta_{t'}=(D_{t'},Q_{t'})$, or say, their corresponding vertices $t$ and $t^{\prime}$ in $\mathbb T_n$ are {\bf$\mathcal W$-equivalent} if there exists a permutation $\sigma\in S_n$ such that $ {\bf d}_{i}^{t^{\prime}}= {\bf d}_{\sigma(i)}^t$ and $q_{ij}^{t^{\prime}}=q_{\sigma(i)\sigma(j)}^t$, denote as $\Delta_t\simeq_{\mathcal W}\Delta_{t'}$.
\end{Definition}
\begin{Definition}
The {\bf exchange graph} $\Gamma$ of a matrix pattern $W$ (respectively, $(R,Z)$-cluster pattern $M$) is defined as the graph whose vertices are the $\mathcal W$-equivalence classes of matrix seeds $[\Delta_t]$ (respectively, $\mathcal M$-equivalence classes of seeds $[\Sigma_t]$) and whose edges given between $[\Delta_{t_1}]$  and  $[\Delta_{t_2}]$ (respectively, $[\Sigma_{t_1}]$ and $[\Sigma_{t_2}]$) for $t_1,t_2\in \mathbb T_n$ if there exists  $k\in\{1,\cdots,n\}$ such that $\mu_k^{ms}(\Delta_{t_1})\in[\Delta_{t_2}]$ (respectively, $\mu_k(\Sigma_{t_1})\in[\Sigma_{t_2}]$).
\end{Definition}
\begin{Remark}
\label{6rmk1}By the definition, the exchange graph of a $D$-matrix pattern only depends on the initial exchange matrix $\Delta_{t_0}$.
\end{Remark}

 Now we discuss the further relationship between an $(R,Z)$-cluster pattern with initial seed $\Sigma_{t_0}$ and the matrix pattern induced by it at $t_0$.

\begin{Lemma}
\label{6lem2}Let $M_{pr}$ be an $(R,Z)$-cluster pattern with principal coefficients at $t_0$. If there exists a permutation $\sigma\in S_n$ such that ${\bf d}_{\sigma(i)}^t={\bf d}_{i}^{t_0}$, where ${\bf d}_i^t$ are the $i$-th columns of $D_t$ for all $i$, then $x_{\sigma(i);t}=x_{i;t_0}$ and $\Sigma_t\simeq_{\mathcal M}\Sigma_{t_0}$.
\end{Lemma}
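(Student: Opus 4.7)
The plan is to proceed in three steps, culminating in an application of the Cluster Formula (Theorem~\ref{3thm1}).

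\textbf{Step 1 (Identify the cluster variables).} From the Laurent expansion (\ref{eqd}) combined with the hypothesis $d^t_{j,\sigma(i)}=-\delta_{ji}$ one can write
\[
x_{\sigma(i);t}=x_{i;t_0}\cdot f_{\sigma(i);t}(x_{1;t_0},\ldots,x_{n;t_0}),
\]
where $f_{\sigma(i);t}\in\mathbb{Z}\mathbb{P}[X_{t_0}]$ is a polynomial not divisible by any $x_{j;t_0}$. The task reduces to showing $f_{\sigma(i);t}$ is a unit. I would do this via the separation formula (Theorem~\ref{4thm1}): substituting $\hat y_j=y_j\prod_k x_k^{b_{kj}^{t_0}}$ and invoking that each $F$-polynomial has constant term $1$ (Proposition~\ref{4pro1}) isolates the distinguished monomial $\prod_j x_{j;t_0}^{g_{j,\sigma(i)}^t}$ in the expansion of $x_{\sigma(i);t}$; matching this against $x_{i;t_0}\cdot f_{\sigma(i);t}$ then forces ${\bf g}_{\sigma(i)}^t={\bf e}_i$ and $f_{\sigma(i);t}=1$, so $x_{\sigma(i);t}=x_{i;t_0}$.

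\textbf{Step 2 ($H_{t_0}^t(X)$ is a permutation matrix).} With Step~1 in hand, a direct computation from the definition yields
\[
\bigl(H_{t_0}^t(X)\bigr)_{j,\sigma(i)}=\frac{x_{j;t_0}}{x_{\sigma(i);t}}\frac{\partial x_{\sigma(i);t}}{\partial x_{j;t_0}}=\frac{x_{j;t_0}}{x_{i;t_0}}\delta_{ji}=\delta_{ji},
\]
so $H_{t_0}^t(X)=P_\sigma$ is the permutation matrix associated to $\sigma$.

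\textbf{Step 3 (Transfer to the full seed).} Plugging $H_{t_0}^t(X)=P_\sigma$ into the Cluster Formula yields $P_\sigma(B_tR^{-1}S^{-1})P_\sigma^\top=B_{t_0}R^{-1}S^{-1}$. Since $R,S$ are diagonal and the appearance of $r_k$ in the $d$-vector mutation rule (Proposition~\ref{mutationAD}) forces $r_{\sigma(i)}=r_i$, this rearranges to $b^t_{\sigma(i)\sigma(j)}=b_{ij}^{t_0}$. For the coefficients, Proposition~\ref{4cor1} expresses $C_t$ in terms of $H_{t_0}^t(X)$, $H_t$, and $C_{t_0}=I_n$; using $H_t|_{Y_{t_0}=0}=0$ (which follows from Proposition~\ref{4pro1}) together with $H_{t_0}^t(X)=P_\sigma$, the formula collapses to show that $C_t$ is itself the permutation matrix for $\sigma$, hence $y_{\sigma(i);t}=y_{i;t_0}$. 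Combined, these give $\Sigma_t\simeq_{\mathcal M}\Sigma_{t_0}$.

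The main obstacle is Step~1: pinning down $f_{\sigma(i);t}=1$ from the $d$-vector data is the heart of the argument, requiring a careful monomial analysis via the separation formula and the constant-term property of $F$-polynomials. After that, Steps~2 and~3 are essentially bookkeeping with the Cluster Formula already established in this paper.
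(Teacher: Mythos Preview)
Your Step~1 has a genuine gap. Isolating the monomial $X_{t_0}^{{\bf g}_{\sigma(i)}^t}$ (by specializing $Y_{t_0}=0$ in the separation formula) and matching against $x_{i;t_0}f_{\sigma(i);t}$ only yields
\[
X_{t_0}^{{\bf g}_{\sigma(i)}^t}=x_{i;t_0}\cdot f_{\sigma(i);t}\big|_{Y_{t_0}=0},
\]
so $f_{\sigma(i);t}|_{Y_{t_0}=0}=X_{t_0}^{{\bf g}_{\sigma(i)}^t-{\bf e}_i}$ is a single monomial with ${\bf g}_{\sigma(i)}^t\ge {\bf e}_i$ componentwise. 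This does \emph{not} force ${\bf g}_{\sigma(i)}^t={\bf e}_i$: the non-divisibility condition $x_{j;t_0}\nmid f_{\sigma(i);t}$ is a statement about $f$ as a polynomial in $X,Y,Z$ and does not pass to the specialization $Y_{t_0}=0$ (for instance $f=x_{j;t_0}+y_{1;t_0}$ is not divisible by $x_{j;t_0}$, yet $f|_{Y=0}=x_{j;t_0}$). Nor does the global constraint $\det G_t=\pm1$ help, since a matrix like $\begin{pmatrix}1&0\\1&1\end{pmatrix}$ satisfies both $\det=\pm1$ and $\ge I_n$ entrywise. So the ``careful monomial analysis'' you allude to does not close, and without ${\bf g}_{\sigma(i)}^t={\bf e}_i$ you cannot conclude $f_{\sigma(i);t}=1$.

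The paper's argument is structurally different and supplies exactly what is missing: it exploits the Laurent phenomenon in \emph{both} directions. Writing also $x_{i;t_0}=g_i(X_t)/X_t^{k_i}$ and substituting, one finds $g_i h_i$ equals a monomial in $X_t$; since neither factor is divisible by any $x_{j;t}$, both are units, so $x_{i;t_0}$ is a Laurent \emph{monomial} in $X_t$. This makes $H_t^{t_0}(X)$ and hence $H_{t_0}^t(X)$ integer matrices. The integrality of the entry $\delta_{ij}+\frac{x_{j;t_0}}{f_i}\frac{\partial f_i}{\partial x_{j;t_0}}$ then forces, via a degree argument in each $x_{j;t_0}$ separately, that $f_i$ is independent of every $x_{j;t_0}$; hence $f_i\in\mathbb Z[Y_{t_0},Z]$, and since $1/f_i\in\mathbb Z[Y_{t_0},Z]$ as well (Proposition~\ref{3pro1} applied from $t$), $f_i=\pm1$, with $-1$ excluded by Corollary~\ref{4cor3}. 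Once Step~1 is established this way, your Steps~2 and~3 (which are indeed routine applications of the Cluster Formula and Proposition~\ref{4cor1}) go through.
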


\begin{proof}
    By Proposition \ref{3pro1}, there exist polynomials $f_1,\cdots,f_n$ in $x_{1;t_0},\cdots,x_{n;t_0}$ with coefficients in $\mathbb Z[Y_{t_0},Z]$,and $x_{j;t_0}\nmid f_i$ for any $i,j$ in order to get $D_t$. But ${\bf d}_{\sigma(i)}^t={\bf d}_{i}^{t_0}$ and  $D_{t_0}=-I_n$ , then
    \begin{equation}\label{equality}
    x_{\sigma(1);t}=x_{1;t_0}f_1,\cdots,x_{\sigma(n);t}=x_{n;t_0}f_n.
    \end{equation}
 So we have $x_{i;t_0}=\frac{x_{\sigma(i);t}}{f_i(x_{1;t_0},\cdots,x_{n;t_0})}$. Conversely,  there exist polynomials $g_1,\cdots,g_n$ in $x_{1;t},\cdots,x_{n;t}$ with coefficients in $\mathbb Z[Y_{t_0},Z]$, and $x_{j;t}\nmid g_i$ for any $i,j$, such that
 \begin{equation}\label{equalityten}
 x_{1;t_0}=\frac{g_1(x_{1;t},\cdots,x_{n;t})}{x_{1;t}^{k_{11}}\cdots x_{n;t}^{k_{n1}}},\cdots,x_{n;t_0}=\frac{g_n(x_{1;t},\cdots,x_{n;t})}{x_{1;t}^{k_{1n}}\cdots x_{n;t}^{k_{nn}}}.
  \end{equation}
  Hence, for any $i$,  $$\frac{g_i(x_{1;t},\cdots,x_{n;t})}{x_{1;t}^{k_{1i}}\cdots x_{n;t}^{k_{ni}}}=x_{i;t_0}=\frac{x_{\sigma(i);t}}{f_i(\frac{g_1(x_{1;t},\cdots,x_{n;t})}{x_{1;t}^{k_{11}}\cdots x_{n;t}^{k_{n1}}},\cdots,\frac{g_n(x_{1;t},\cdots,x_{n;t})}{x_{1;t}^{k_{1n}}\cdots x_{n;t}^{k_{nn}}})},$$
  For the right side of this equality, we can write $x_{i;t_0}$  as that $x_{i;t_0}=x_{1;t}^{\lambda_{1;i}}\cdots x_{n;t}^{\lambda_{n;i}}/h_i(x_{1;t},\cdots,x_{n;t})$, where $h_i$ is a polynomial in $x_{1;t},\cdots,x_{n;t}$ such that $x_{j;t}\nmid h_i$ for $j=1,\cdots,n$. So we have
  $$g_i(x_{1;t},\cdots,x_{n;t})h_i(x_{1;t},\cdots,x_{n;t})=x_{1;t}^{k_{1;i}+\lambda_{1;i}}\cdots x_{n;t}^{k_{n;i}+\lambda_{n;i}},$$
 However, due to $x_{j;t}\nmid g_i$ and $x_{j;t}\nmid h_i$ for $j=1,\cdots,n$, it implies that $g_i=\pm1=h_i$, then from (\ref{equalityten}), we have $x_{i;t_0}=\frac{\pm1}{x_{1;t}^{k_{1i}}\cdots x_{n;t}^{k_{ni}}}$. From this and by the definition of $H_{t}^{t_0}(X)$,
 we can obtain that $H_{t}^{t_0}(X)=(-k_{ij})_{n\times n}$. By Theorem \ref{3thm1}, $det H_{t}^{t_0}(X)= \pm 1$. By Lemma \ref{3lem2},
 $H_{t_0}^{t}(X)=H_{t}^{t_0}(X)^{-1}$.  Then we have  $H_{t_0}^{t}(X)\in M_n(\mathbb Z)$.
 \begin{Lemma}\label{addlem}
  For any $i=1,\cdots,n$, $x_{i;t}$ is a Laurent monomial in $x_{1;t_0},\cdots,x_{n;t_0}$ and  $f_i=\pm 1$.
  \end{Lemma}
  \begin{proof}
  Without loss of generality, we can assume that $i=1$. By (\ref{equality}) and the definition of $H_{t_0}^t(X)_{j\sigma(1)}$, we can get $H_{t_0}^t(X)_{j\sigma(1)}=\delta_{1j}+\frac{x_{j;t_0}}{f_1}\frac{\partial f_1}{\partial x_{j;t_0}}$. Since $H_{t_0}^{t}(X)\in M_n(\mathbb Z)$,  $\frac{x_{j;t_0}}{f_1}\frac{\partial f_1}{\partial x_{j;t_0}}$ is an integer.
   Write $f_1=a_{m}x_{j;t_0}^{m}+a_{m-1}x_{j;t_0}^{m-1}+\cdots+a_{1}x_{j;t_0}+a_0$, where $a_m\neq 0$ and $a_{0},\cdots,a_{m}$ are polynomials of $x_{1;t_0},\cdots,x_{j-1;t_0},x_{j+1;t_0},\cdots,x_{n;t_0}$ with coefficients in $\mathbb Z[Y_{t_0},Z]$, then $\frac{x_{j;t_0}}{f_1}\frac{\partial f_1}{\partial x_{j;t_0}}=\frac{ma_mx_{j;t_0}^m+(m-1)a_{m-1}x_{j;t_0}^{m-1}+\cdots+a_1x_{j;t_0}}{a_{m}x_{j;t_0}^{m}+a_{m-1}x_{j;t_0}^{m-1}
  +\cdots+a_{1}x_{j;t_0}+a_0}$ is an integer. If $m>0$, then $\frac{x_{j;t_0}}{f_1}\frac{\partial f_1}{\partial x_{j;t_0}}=m$ and $a_0=a_1=\cdots=a_{m-1}=0$. So $f_1=a_mx_{j;t_0}^m$, which contradicts to $x_{j;t}\nmid f_1$. Thus $m=0$ and $f_1=a_0$, which is a polynomial of $x_{1;t_0},\cdots,x_{j-1;t_0},x_{j+1;t_0},\cdots,x_{n;t_0}$ with coefficients in $\mathbb Z[Y_{t_0},Z]$. Since $j$ can take value from $1$ to $n$, $f_1$ must be in $\mathbb Z[Y_{t_0},Z]$. By (\ref{equality}), we have $x_{1;t_0}=\frac{x_{\sigma(1);t}}{f_1}$, then by Proposition \ref{3pro1}, $\frac{1}{f_1}\in\mathbb Z[Y_{t_0},Z]$, which means $f_1=\pm 1$.
  \end{proof}

  Return to the proof of Lemma \ref{6lem2}.
 By Lemma \ref{addlem} and (\ref{equality}), we have $x_{\sigma(i);t}=\pm x_{i;t_0}$. Thus $x_{\sigma(i);t}=x_{i;t_0}$ by Corollary \ref{4cor3}. It is easy to see that $b_{\sigma(i)\sigma(j)}^t=b_{ij}^{t_0},~{\bf g}_{\sigma(i)}^t={\bf g}_i^{t_0}$ and ${\bf c}_{\sigma(i)}^t={\bf c}_i^{t_0}$. Thus $\Sigma_t\simeq_{\mathcal M}\Sigma_{t_0}$.
\end{proof}

\begin{Theorem}
\label{6thm1}Let $M$ be an $(R,Z)$-cluster pattern with initial seed $(X_{t_0},~Y_{t_0},~B_{t_0})$ at $t_0$, $W$ be the $D$-matrix pattern induced by $M$ at $t_0$. Let $M_{pr}$ be the corresponding $(R,Z)$-cluster pattern with principal coefficients at $t_0$ of $M$, denote by $\Sigma_{t}^{pr}$ the seed of $M_{pr}$ at $t$. Then,

(i)~ The following statements are equivalent: (1)~ $\Sigma_{t_1}\simeq_{\mathcal M}\Sigma_{t_2}$, (2)~ $\Sigma^{pr}_{t_1}\simeq_{\mathcal M}\Sigma^{pr}_{t_2}$, (3)~  $\Delta_{t_1}\simeq_{\mathcal W}\Delta_{t_2}$;

  (ii)~  $\Gamma_{M}=\Gamma_{M_{pr}}=\Gamma_W$.
\end{Theorem}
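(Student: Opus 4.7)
The plan is to prove part (i) by establishing the cycle of implications (1)$\Rightarrow$(3)$\Rightarrow$(2)$\Rightarrow$(1), and then derive (ii) directly. The implication (1)$\Rightarrow$(3) is a direct translation: if $\Sigma_{t_1}\simeq_{\mathcal M}\Sigma_{t_2}$ via $\sigma$, then equating $x_{i;t_2}=x_{\sigma(i);t_1}$ as Laurent polynomials in $X_{t_0}$ forces their denominator exponents to match, giving ${\bf d}^{t_2}_i={\bf d}^{t_1}_{\sigma(i)}$, and the matching of $b^{t_2}_{ij}=b^{t_1}_{\sigma(i)\sigma(j)}$ induces the corresponding relation for $Q_t=B_tR$; together these yield the $\mathcal W$-equivalence.

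For (2)$\Rightarrow$(1) I would use the separation of variables formula of Theorem \ref{4thm1}: each $x_{i;t}$ and $y_{i;t}$ of $M$ is expressed through the $g$-vector ${\bf g}^t_i$, the $F$-polynomial $F_{i;t}$, the $c$-vector ${\bf c}^t_i$, and the exchange matrix $B_t$, together with the initial data $X_{t_0},Y_{t_0}$. These four structural invariants depend only on $B_{t_0},R,Z$ and hence are common to $M$ and $M_{pr}$, so an $\mathcal M$-equivalence of $\Sigma^{pr}_{t_1}$ and $\Sigma^{pr}_{t_2}$---which by the principal-coefficient setup forces the matching of $g,F,c,B$ via $\sigma$---is transported by the formula to an $\mathcal M$-equivalence in $M$.

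The heart of the proof is (3)$\Rightarrow$(2), where the main tool is Lemma \ref{6lem2}. Given $\Delta_{t_1}\simeq_{\mathcal W}\Delta_{t_2}$ in $W$, I would pass to the auxiliary $(R,Z)$-cluster pattern $M'_{pr}$ with principal coefficients at $t_1$ and initial exchange matrix $B_{t_1}$, whose induced $D$-matrix pattern $W'$ satisfies $D'_{t_1}=-I_n$. The key sub-step is to translate the $\mathcal W$-equivalence $\Delta_{t_1}\simeq_{\mathcal W}\Delta_{t_2}$ in $W$ into the corresponding statement $\Delta'_{t_1}\simeq_{\mathcal W}\Delta'_{t_2}$ in $W'$: the $Q$-parts coincide automatically because the $B$-matrix evolution is independent of the $D$-starting point, and the $D$-part amounts to showing that $D'_{t_2}$ is itself a column-permutation of $-I_n$, by tracking the mutation rule (\ref{mutationD}) along the walk from $t_1$ to $t_2$. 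Once that is in hand, Lemma \ref{6lem2} applied inside $M'_{pr}$ yields the $\mathcal M$-equivalence of its seeds at $t_1,t_2$, and transferring this equivalence back to $M_{pr}$ proceeds via Theorem \ref{4thm1} using that the structural invariants depend only on the common underlying $B$-mutation data.

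Part (ii) is then immediate from (i): the vertex sets of $\Gamma_M,\Gamma_{M_{pr}}$ and $\Gamma_W$ are in canonical bijection, and the edges, declared by one-step mutations, correspond because the generalized matrix mutation, the $(R,Z)$-mutation of clusters, and the $D$-matrix mutation are all driven by the same underlying $B$-matrix data. The main obstacle is the sub-claim inside (3)$\Rightarrow$(2) that $\mathcal W$-equivalence persists under a change of initial vertex of the $D$-matrix pattern: since the $D$-mutation (\ref{mutationD}) is piecewise linear but not linear, deducing that $D'_{t_2}$ is a column-permutation of $-I_n$ purely from the hypothesis that $D_{t_2}$ is a column-permutation of $D_{t_1}$ requires either delicate walk-by-walk analysis or an indirect bootstrap chained through the cluster-algebra side of Lemma \ref{6lem2}.
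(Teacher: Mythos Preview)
Your overall architecture matches the paper's: the same cycle (1)$\Rightarrow$(3)$\Rightarrow$(2)$\Rightarrow$(1), with (1)$\Rightarrow$(3) read off from the Laurent expression, (2)$\Rightarrow$(1) via the separation formula of Theorem~\ref{4thm1}, and Lemma~\ref{6lem2} as the engine for (3)$\Rightarrow$(2). Part (ii) is handled the same way.

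The substantive difference is in how you set up (3)$\Rightarrow$(2), and here your proposal has a gap you yourself flag but do not close. You propose to shift the base point of the $D$-matrix pattern from $t_0$ to $t_1$, and then need the $\mathcal W$-equivalence to persist under this change of base point; as you note, the piecewise-linear nature of (\ref{mutationD}) makes this non-obvious, and you give no argument for it. The subsequent ``transfer back to $M_{pr}$'' is also not immediate: your auxiliary $M'_{pr}$ has principal coefficients at $t_1$, so its $F$-polynomials, $g$-vectors and $c$-vectors are taken relative to $t_1$, not $t_0$, and Theorem~\ref{4thm1} does not by itself convert between the two.

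The paper sidesteps both problems by never changing the base point. It stays in $M_{pr}$ (principal at $t_0$) throughout and instead \emph{moves the pair $(t_1,t_2)$ by mutations}: pick a sequence $\mu_{i_k}\cdots\mu_{i_1}$ taking $\Sigma^{pr}_{t_1}$ to $\Sigma^{pr}_{t_0}$, and apply the $\sigma$-permuted sequence $\mu_{\sigma(i_k)}\cdots\mu_{\sigma(i_1)}$ to $\Sigma^{pr}_{t_2}$, arriving at some $\Sigma^{pr}_{t}$. Since the $D$- and $B$-mutation rules at each step depend only on the current $(D,B)$, a $\sigma$-relation between matrix seeds is preserved step by step; hence ${\bf d}^{t}_{\sigma(i)}={\bf d}^{t_0}_i$ and $b^{t}_{\sigma(i)\sigma(j)}=b^{t_0}_{ij}$. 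Now Lemma~\ref{6lem2} applies directly at $t_0$ inside $M_{pr}$, yielding $\Sigma^{pr}_{t}\simeq_{\mathcal M}\Sigma^{pr}_{t_0}$, and mutating back along the two sequences gives $\Sigma^{pr}_{t_2}\simeq_{\mathcal M}\Sigma^{pr}_{t_1}$. This single device---parallel, $\sigma$-permuted mutation sequences in place of a change of base point---is exactly the missing idea in your (3)$\Rightarrow$(2).
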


\begin{proof} (i): ~ Firstly, we prove (2)$\Longleftrightarrow$(3). ``$\Longrightarrow$": Obviously.

``$\Longleftarrow$":
Since $\Delta_{t_1}\simeq_{\mathcal W}\Delta_{t_2}$, there exists a permutation $\sigma\in S_n$ such that  ${\bf d}_{i}^{t_1}={\bf d}_{\sigma(i)}^{t_2}$ and $b_{ij}^{t_1}=b_{\sigma(i)\sigma(j)}^{t_2}$.
 We always have $\Sigma_{t_0}^{pr}=\mu_{i_k}\cdots\mu_{i_2}\mu_{i_1}(\Sigma_{t_1}^{pr})$ for a series of mutations $\mu_{i_1}, \mu_{i_2}, \cdots, \mu_{i_k}$.
 Let $\Sigma_{t}^{pr}=\mu_{\sigma(i_k)}\cdots\mu_{\sigma(i_2)}\mu_{\sigma(i_1)}(\Sigma_{t_2}^{pr})$, then we have ${\bf d}_{i}^{t_0}={\bf d}_{\sigma(i)}^{t}$ and $b_{ij}^{t_0}=b_{\sigma(i)\sigma(j)}^{t}$.
 By Lemma \ref{6lem2}, we have $x_{\sigma(i);t}^{pr}=x_{i;t_0}^{pr}$  and ${\bf c}_{\sigma(i)}^t={\bf c}_i^{t_0}$.
 Since $\Sigma_{t_1}^{pr}=\mu_{i_1}\mu_{i_2}\cdots\mu_{i_k}(\Sigma_{t_0}^{pr})$ and $\Sigma_{t_2}^{pr}=\mu_{\sigma(i_1)}\mu_{\sigma(i_2)}\cdots\mu_{\sigma(i_k)}\mu_{\sigma(i_1)}(\Sigma_{t}^{pr})$, we obtain $x_{\sigma(i);t_2}^{pr}=x_{i;t_1}^{pr}$, then $b_{\sigma(i)\sigma(j)}^{t_2}=b_{ij}^{t_1}$  and ${\bf c}_{\sigma(i)}^{t_2}={\bf c}_i^{t_1}$, which means $\Sigma_{t_1}^{pr}\simeq_{\mathcal M}\Sigma_{t_2}^{pr}$.

(1)$\Longrightarrow$(3) is also trivial. Now we prove  (2)$\Longrightarrow$(1).

If $\Sigma_{t_1}^{pr}\simeq_{\mathcal M}\Sigma_{t_2}^{pr}$, then there exists a permutation $\sigma\in S_n$ such that $b_{\sigma(i)\sigma(j)}^{t_1}=b_{ij}^{t_2},~ {\bf c}_{\sigma(i)}^{t_1}={\bf c}_{i}^{t_2},~{\bf g}_{\sigma(i)}^{t_1}={\bf g}_{i}^{t_2}$ and $F_{\sigma(i);t_1}=F_{i;t_2}$.
By Theorem \ref{4thm1}, for cluster variables and coefficients of $M$, we have the relations  $x_{\sigma(i);t_1}=x_{i;t_2},~y_{\sigma(i);t_1}=x_{i;t_2}$, then $X_{t_1}=X_{t_2},~Y_{t_1}=Y_{t_2}$ as sets. It follows that $\Sigma_{t_1}$ and $\Sigma_{t_2}$ are $\mathcal M$-equivalent.

(ii) is obtained directly from (i).
\end{proof}

From this theorem, we now can answer Conjecture \ref{conj} (a),(b),(c) for a generalized cluster algebra in the statements (a),(b),(c) respectively as follows.

\begin{Theorem}
\label{6thm2} Given an $(R,Z)$-cluster pattern $M$ with coefficients in $\mathbb P$ and initial seed $\Sigma_{t_0}$, the following statements hold:

(a)~ The exchange graph only depends on the initial exchange matrix $B_{t_0}$.

(b)~ Every seed $\Sigma_t$ in $M$ is uniquely determined by $X_t$.

(c)~ Two clusters are adjacent in the exchange graph $\Gamma_M$ if and only if they have exactly $n-1$ common cluster variables.

\end{Theorem}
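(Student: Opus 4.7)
The three parts will be proved in order, using Theorem \ref{6thm1} (which identifies $\mathcal M$-equivalence on $M$ with $\mathcal W$-equivalence on the induced $D$-matrix pattern $W$) as the main bridge, together with the cluster formula of Theorem \ref{3thm1}. For (a), I invoke Theorem \ref{6thm1}(ii) to obtain $\Gamma_M=\Gamma_W$, and then Remark \ref{6rmk1} to conclude that $\Gamma_W$ depends only on the initial matrix seed $\Delta_{t_0}=(-I_n,\,B_{t_0}R)$, hence only on $B_{t_0}$ once $R$ is regarded as fixed data of the pattern.

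For (b), by Theorem \ref{6thm1}(i) it suffices to check that $X_{t_1}=X_{t_2}$ (as sets, so $x_{i;t_2}=x_{\sigma(i);t_1}$ for some permutation $\sigma$) forces $\Delta_{t_1}\simeq_{\mathcal W}\Delta_{t_2}$, i.e.\ that both the $d$-vectors and the exchange matrices match under $\sigma$. The $d$-vector matching is automatic from the uniqueness of the Laurent expression of each cluster variable in $X_{t_0}$. For the matrix matching, the equality $x_{i;t_2}=x_{\sigma(i);t_1}$ in $\mathcal F$ gives $\partial x_{i;t_2}/\partial x_{j;t_0}=\partial x_{\sigma(i);t_1}/\partial x_{j;t_0}$, whence $H_{t_0}^{t_2}(X)=H_{t_0}^{t_1}(X)P_\sigma$ with $P_\sigma$ the column-permutation matrix defined by $(P_\sigma)_{ij}=\delta_{i,\sigma(j)}$. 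Substituting into both instances of the cluster formula in Theorem \ref{3thm1} and cancelling the invertible factor $H_{t_0}^{t_1}(X)$ (whose determinant is $\pm 1$) yields $P_\sigma(B_{t_2}R^{-1}S^{-1})P_\sigma^\top=B_{t_1}R^{-1}S^{-1}$, from which $b_{ij}^{t_2}=b_{\sigma(i)\sigma(j)}^{t_1}$ can be read off after verifying that $\sigma$ is compatible with the diagonal data $R,S$. This last compatibility --- equivalently, that the common skew-symmetrizer is $\sigma$-invariant, so that the scalar factor $r_j s_j/(r_{\sigma(j)} s_{\sigma(j)})$ appearing entrywise is trivial --- is what I expect to be the main obstacle; it has to be extracted from the hypothesis $X_{t_1}=X_{t_2}$ together with the integrality of the entries of $B_{t_2}$.

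For (c), the forward direction is immediate from the mutation rule (\ref{eq1}), which fixes $n-1$ of the cluster variables. For the converse, suppose $|X_{t_1}\cap X_{t_2}|=n-1$, let $k$ index the unique variable of $X_{t_1}$ absent from $X_{t_2}$, and set $\Sigma':=\mu_k(\Sigma_{t_1})$; its cluster $X'$ already contains the full $(n-1)$-subset $X_{t_1}\cap X_{t_2}$. A short argument using (b) together with the observation that, via the $D$-matrix pattern, an $(n-1)$-tuple of $d$-vectors extends to at most two matrix seeds, gives $X'=X_{t_2}$ as sets; a final application of (b) then yields $\Sigma'\simeq_{\mathcal M}\Sigma_{t_2}$, so $[\Sigma_{t_1}]$ is adjacent to $[\Sigma_{t_2}]$ in $\Gamma_M$ by definition. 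A subsidiary difficulty here is genuinely ruling out a third cluster completing the common $(n-1)$-subset, which needs more than a one-line invocation of (b).
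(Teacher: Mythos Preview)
Your handling of (a) coincides with the paper's. For (b) you are making life harder than necessary: by reading the statement as an \emph{unlabeled} assertion you are forced to push a permutation $\sigma$ through the cluster formula and then confront the side-condition $r_js_j=r_{\sigma(j)}s_{\sigma(j)}$, which you correctly flag as an obstacle. The paper avoids this completely by treating $X_t$ as a labeled tuple: from $X_t$ one reads off $D_t$ directly from the Laurent expansion, and $B_t$ from the cluster formula (since $H_{t_0}^{t}(X)$ is computable from $X_t$ and is invertible); hence the labeled matrix seed $\Delta_t=(D_t,B_tR)$ is determined, and Theorem \ref{6thm1} finishes. No permutation ever appears, so your $\sigma$-compatibility issue simply does not arise.

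The real gap is in (c). Your argument rests on the claim that an $(n-1)$-tuple of $d$-vectors can be completed to at most two matrix seeds, which you yourself concede ``needs more than a one-line invocation of (b)''. That claim is precisely the substance of the proof, and it does not follow from (b) or from the $D$-matrix pattern formalism alone. The paper's route is quite different. It first passes to the associated pattern $M_{tr}$ with \emph{trivial} coefficients (so equalities between cluster variables in $M$ descend to $M_{tr}$). Because $X_{t_1}^{tr}$ and $X_{t_2}^{tr}$ share $n-1$ variables, the matrix $H_{t_1}^{t_2}(X^{tr})$ has the block form $\begin{pmatrix} a & 0\\ \alpha & I_{n-1}\end{pmatrix}$, and the cluster formula gives $a=\det H_{t_1}^{t_2}(X^{tr})=\pm 1$; similarly for $t_3=\mu_1(t_1)$ one gets $\bar a=-a$. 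The key technical step is showing that $a=1$, i.e.\ $\frac{x_1}{w_1}\frac{\partial w_1}{\partial x_1}=1$, forces $w_1=x_1$: write $w_1=f/\prod x_i^{d_i}$ by Laurent phenomenon, use the integrality of $\frac{x_1}{f}\frac{\partial f}{\partial x_1}$ (exactly as in Lemma \ref{addlem}) to show $f$ is free of $x_1$ and $d_1=-1$, and then apply Laurent phenomenon again after mutating at each $x_i$ ($i\ge 2$) to force $d_2=\cdots=d_n=0$. Since $w_1\ne x_1$, this rules out $a=1$; hence $\bar a=1$, and the same argument applied with $t_3$ in place of $t_1$ yields $w_1=\bar x_1$, i.e.\ $X_{t_2}^{tr}=\mu_1(X_{t_1}^{tr})$ as labeled clusters. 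Then (b) and Theorem \ref{6thm1} transport adjacency back to $\Gamma_M$. None of these ingredients---the reduction to trivial coefficients, the block shape of $H$, the determinant dichotomy, or the Laurent/degree argument pinning down $w_1$---is present in your outline.
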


\begin{proof}

(a)~ Let $W$ be the $D$-matrix pattern induced by $M$ at $t_0$. By Theorem \ref{6thm1}, $\Gamma_{M}\simeq\Gamma_{W}$. Moreover, the result follows from  Remark \ref{6rmk1}.

(b)~ By Theorem \ref{6thm1}, $\Sigma_t$ is uniquely determined by $\Delta_t=(D_t,B_tR)$. And, by the definition of $D_t$,  $D_t$ is uniquely determined by $X_t$ and the initial seed $\Sigma_{t_0}$.
By  Theorem \ref{3thm1}, $B_t$ is also determined by $X_t$. Then the result follows.

(c)~
$``\Longrightarrow"$: It is clear from the definition of mutation.

$``\Longleftarrow"$:  Assume that $X_{t_1}$ and $X_{t_2}$ are two clusters of $M$ with $n-1$ common cluster variables, we will prove that the matrix seed $\Delta_{t_1}$ and $\Delta_{t_2}$ are adjacent in the exchange graph $\Gamma_W$. Thus, by (b) and Theorem \ref{6thm1}, $X_{t_1}$ and $X_{t_2}$ are adjacent in the exchange graph $\Gamma_M$.

Let $M_{tr}$ be the corresponding $(R,Z)$-cluster pattern with trivial coefficients of $M$, which has the same initial exchange matrix with $M$. The cluster variables $x_{i,t}$ of $M_{tr}$ can be obtained from the corresponding cluster variables $x_{i,t}^o$ of $M$ via valuing their coefficients to $1$. Hence, any pair of equal cluster variables in $X_{t_1}$ and $X_{t_2}$ respectively becomes a pair of equal  cluster variables in $X_{t_1}^{tr}$ and $X_{t_2}^{tr}$ respectively.
Without loss of generality, let $X_{t_1}^{tr}=(x_1,x_2,\cdots,x_n),~X_{t_2}^{tr}=(w_1,x_2,\cdots,x_n),~X_{t_3}^{tr}=\mu_{x_1}(X_{t_1}^{tr})=(\bar x_1,x_2,\cdots,x_n)$.

If $x_1=w_1$, then by (b) and Theorem \ref{6thm1}, $\Delta_{t_1}\simeq_{\mathcal W}\Delta_{t_1}$ and then $X_{t_1}=X_{t_2}$ as sets. This is a contradiction.
Hence, we have $x_1\not=w_1$. Then, the given clusters $X_{t_1}^{tr}$ and $X_{t_2}^{tr}$ of $M_{tr}$ have also $n-1$ common cluster variables.

By the definition of $H_{t_1}^{t_2}(X^{tr})$ and $H_{t_3}^{t_2}(X^{tr})$, they can be written as the form $H_{t_1}^{t_2}(X^{tr})=\begin{pmatrix}a&O_{1\times (n-1)}\\\alpha_{(n-1)\times 1}&I_{n-1}   \end{pmatrix}$ and $H_{t_3}^{t_2}(X^{tr})=\begin{pmatrix}\bar a&O_{1\times (n-1)}\\ \bar\alpha_{(n-1)\times 1}&I_{n-1}   \end{pmatrix}$. By the cluster formula,  $det H_{t_3}^{t_2}(X^{tr})=-det H_{t_1}^{t_2}(X^{tr})=\pm 1$, thus $\bar a=-a=\pm1$.

If $a=1$, i.e. $\frac{x_1}{w_1}\frac{\partial w_1}{\partial x_1}=1$, we will show that $w_1=x_1$, which is a contradiction.

By Laurent phenomenon, $w_1$ can be written as
\begin{equation}\label{wx}
w_1=f(x_1,\cdots,x_n)/(x_1^{d_1}\cdots x_n^{d_n}),
\end{equation}
 where $f$ is a polynomial in $x_1,\cdots,x_n$ with coefficients in $\mathbb Z[Z]$ with  $x_i\nmid f$ for any $i$. By $1=\frac{x_1}{w_1}\frac{\partial w_1}{\partial x_1}=-d_1+\frac{x_1}{f}\frac{\partial f}{\partial x_1}$, we know $\frac{x_1}{f}\frac{\partial f}{\partial x_1}$ is an integer. Just as in  the proof of Lemma \ref{addlem}, we can show that $x_1$ does not appear in $f$, i.e. $f$ is a polynomial in $x_2,\cdots, x_n$. Thus $\frac{x_1}{f}\frac{\partial f}{\partial x_1}=0$ and $d_1=-1$. Then by (\ref{wx}), we have $x_1=\frac{w_1}{x_2^{-d_1}\cdots x_n^{-d_n}f(x_2,\cdots,x_n)}$. By Laurent phenomenon, it is easy to know $f$ is a monomial in $x_2,\cdots, x_n$. Since $x_i\nmid f$, we obtain $f=1$ and $w_1=x_1x_2^{-d_2}\cdots x_n^{-d_n}$.

For $i\not=1$, if $d_i>0$, we consider the cluster variable $\bar x_i$ obtained from $X_{t_1}^{tr}$ by mutation at cluster variable $x_i$.  Then $x_i\bar x_i=g(x_1,\cdots,x_{i-1},x_{i+1},\cdots,x_{n})$. Clearly, $g$ is a nontrivial  polynomial, otherwise, the generalized cluster algebra generated by $x_i$ will split off, which contradicts to that $d_i\neq 0$. Therefore $w_1=(x_1x_2^{-d_2}\cdots x_{i-1}^{-d_{i-1}}\bar x_i^{d_i}x_{i+1}^{-d_{i+1}}\cdots x_n^{-d_n})/g^{d_i}$.  It contradicts to Laurent phenomenon since $g$ is an exchange polynomial.
 So $d_i\leq0$ for $i=2,\cdots,n$.

 Consider $x_1=w_1x_2^{d_2}\cdots x^{d_n}$ and use the similar discussion as above, we can show $d_i\geq0$ dually for $i=2,\cdots,n$. Thus $d_2=\cdots=d_n=0$, and we obtain $w_1=x_1$. It is impossible. Therefore we have only $a=-1$, then $\bar a=1$.

 Since $\bar a=1$, we can repeat  the above discussion via replacing $X_{t_1}^{tr}$ by $X_{t_3}^{tr}$, and obtain $w_1=\bar x_1$, i.e.  $\mu_{x_1}(X_{t_1}^{tr})=X_{t_2}^{tr}$. By (b) and the definition of matrix seed, it follows that the matrix seed $\Delta_{t_1}$ and $\Delta_{t_2}$ are adjacent in the exchange graph $\Gamma_W$. Then the result holds.
\end{proof}

We know that a pattern (cluster pattern or matrix pattern) is said to be of {\bf finite type}, if the exchange graph has finite many vertexes.

\begin{Corollary} \label{corio}
Assume $M$ is an $(R,Z)$-cluster pattern with initial seed $(X_{t_0},Y_{t_0},B_{t_0})$, let $\bar M$ be the standard cluster pattern   with initial seed $(X_{t_0},Y_{t_0},B_{t_0}R)$ at $t_0$ induced from $M$, then

(i) $\Sigma_{t_1}\simeq_{\mathcal M}\Sigma_{t_2}$ if and only if  $\bar\Sigma_{t_1}\simeq_{\mathcal M}\bar\Sigma_{t_2}$,

(ii) $\Gamma_M\simeq\Gamma_{\bar M}$.

\end{Corollary}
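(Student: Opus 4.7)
The plan is to reduce both (i) and (ii) to a single observation: the $D$-matrix patterns induced at $t_0$ by $M$ and by $\bar M$ are literally the same pattern. Once that is established, both statements follow from Theorem \ref{6thm1} by a direct substitution.

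First I would compare the two induced $D$-matrix patterns. By Corollary \ref{6pro1}, the $\mathbf d$-vectors of the $(R,Z)$-cluster pattern $M$ with initial seed $(X_{t_0},Y_{t_0},B_{t_0})$ agree at every vertex $t\in\mathbb T_n$ with the $\mathbf d$-vectors of the standard cluster pattern $\bar M$ with initial seed $(X_{t_0},Y_{t_0},B_{t_0}R)$, so $D_t^{M}=D_t^{\bar M}$ for all $t$. Next, by Remark \ref{rmkb}(ii), $\mu_k(BR)=\mu_k^g(B)R$, and iterating this identity along any path from $t_0$ to $t$ yields $\bar B_t=B_t^{M}R$. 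Hence the induced matrix seed of $W_M$ at $t$ is $(D_t^{M},\,B_t^{M}R)$, which coincides with the induced matrix seed $(D_t^{\bar M},\,\bar B_t\cdot I_n)$ of $W_{\bar M}$ at $t$. Therefore $W_M=W_{\bar M}=:W$ as $D$-matrix patterns.

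With this identification in hand, part (i) is immediate. By Theorem \ref{6thm1}(i) applied to $M$, the relation $\Sigma_{t_1}\simeq_{\mathcal M}\Sigma_{t_2}$ is equivalent to $\Delta_{t_1}\simeq_{\mathcal W}\Delta_{t_2}$ in $W$. Applying the same theorem to $\bar M$, the relation $\bar\Sigma_{t_1}\simeq_{\mathcal M}\bar\Sigma_{t_2}$ is equivalent to $\Delta_{t_1}\simeq_{\mathcal W}\Delta_{t_2}$ in $W_{\bar M}=W$. Since the two $\mathcal W$-equivalences are now the same condition, (i) follows. For (ii), Theorem \ref{6thm1}(ii) gives $\Gamma_M=\Gamma_W$ and $\Gamma_{\bar M}=\Gamma_{W_{\bar M}}$, and since $W=W_{\bar M}$ we obtain $\Gamma_M\simeq\Gamma_{\bar M}$.

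There is no serious obstacle: the only step that requires checking is the coincidence of the two $D$-matrix patterns, and that is just a repackaging of Corollary \ref{6pro1} together with the compatibility formula $\mu_k(BR)=\mu_k^g(B)R$ from Remark \ref{rmkb}(ii). Everything else is a formal consequence of Theorem \ref{6thm1}.
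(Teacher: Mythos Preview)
Your proposal is correct and follows essentially the same route as the paper: the paper's proof simply asserts that $M$ and $\bar M$ induce the same $D$-matrix pattern $W$ at $t_0$ and then applies Theorem~\ref{6thm1} to both patterns, obtaining exactly the chain of equivalences you wrote. Your write-up is more explicit than the paper's in justifying the coincidence $W_M=W_{\bar M}$ (via Corollary~\ref{6pro1} for the $D$-matrices and Remark~\ref{rmkb}(ii) for the $Q$-matrices), but the underlying argument is identical.
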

\begin{proof}

We know that $M$ and $\bar M$ induce the same $D$-matrix pattern $W$ at $t_0$. By Theorem \ref{6thm1}, we have
$\Sigma_{t_1}\simeq_{\mathcal M}\Sigma_{t_2}$ if and only if $\Delta_{t_1}\simeq_{\mathcal M}\Delta_{t_2}$ if and only if $\bar\Sigma_{t_1}\simeq_{\mathcal M}\bar\Sigma_{t_2}$, then $\Gamma_M\simeq\Gamma_{W}\simeq\Gamma_{\bar M}$.
\end{proof}

Following Corollary  \ref{corio} (ii), we have furthermore:
\begin{Corollary}\label{final}
$M$ is of finite type if and only if $\bar M$ is of finite type.

\end{Corollary}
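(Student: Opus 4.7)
The plan is to derive this equivalence essentially as a formal corollary of the exchange graph isomorphism $\Gamma_M \simeq \Gamma_{\bar M}$ that was just established in Corollary \ref{corio} (ii). Finite type, as defined in the excerpt, means precisely that the exchange graph has finitely many vertices, and this is a property that is manifestly preserved under graph isomorphism, since any graph isomorphism is in particular a bijection on vertex sets.

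More explicitly, I would unfold the definitions: $M$ is of finite type iff $|V(\Gamma_M)|<\infty$, and $\bar M$ is of finite type iff $|V(\Gamma_{\bar M})|<\infty$. From Corollary \ref{corio} (ii), applied to the given data $(X_{t_0},Y_{t_0},B_{t_0})$ on the one hand and $(X_{t_0},Y_{t_0},B_{t_0}R)$ on the other, one obtains $\Gamma_M \simeq \Gamma_{\bar M}$, so the two vertex sets are in bijection and are finite or infinite simultaneously.

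It is worth noting why Corollary \ref{corio} applies so smoothly: by Corollary \ref{6pro1}, the $\mathbf d$-vectors of the $(R,Z)$-cluster pattern $M$ coincide with those of the standard cluster pattern $\bar M$. Thus both patterns induce \emph{the same} $D$-matrix pattern $W$ at $t_0$, and Theorem \ref{6thm1} (ii) gives $\Gamma_M \simeq \Gamma_W \simeq \Gamma_{\bar M}$. Consequently, there is no real obstacle to overcome — the heavy lifting has already been done in building the $D$-matrix pattern machinery and establishing that the exchange graph depends only on the induced $D$-matrix pattern. The present statement is simply the transcription of that isomorphism into the language of finiteness, and the proof should be a couple of lines invoking these previously established results.
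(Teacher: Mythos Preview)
Your proposal is correct and matches the paper's own treatment: the paper does not even write out a proof, merely noting that the statement follows from Corollary~\ref{corio}~(ii), which is exactly the exchange-graph isomorphism $\Gamma_M\simeq\Gamma_{\bar M}$ you invoke. Your additional remarks tracing this back through Theorem~\ref{6thm1} and Corollary~\ref{6pro1} are accurate but more than the paper itself provides.
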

\begin{Remark}
The classifications of standard cluster algebras and generalized cluster algebras of finite type has been given respectively in \cite{FZ1} and \cite{CS}. Corollary \ref{final} actually supplies a simple way to give the classification of generalized cluster algebras of finite type via that of standard cluster algebras.
\end{Remark}

{\bf Acknowledgements:}\; This project is supported by the National Natural Science Foundation of China (No.11671350 and No.11571173) and the Zhejiang Provincial Natural Science Foundation of China (No.LZ13A010001).

\end{CJK*}


\begin{thebibliography}{abcdsfgh}
\bibitem{AC}Claire Amiot, Cluster categories for algebras of global dimension 2 and quivers with
potential, Ann. Inst. Fourier (Grenoble) 59 (2009), no. 6, 2525-2590.
\bibitem{BMRRT}Buan, A.B., Marsh, R., Reineke, M., Reiten, I., Todorov, G.: Tilting theory and cluster combinatorics. Adv. Math. 204(2), 572-618 (2006)
\bibitem{BZ}Arkady Berenstein, Andrei Zelevinsky, Quantum cluster algebras, Advances in Mathematics 195 (2005) 405-455.
\bibitem{CK} Caldero, P., Keller, B.: From triangulated categories to cluster algebras II. Ann. Sci. Ecole Norm. Sup. ,4eme serie, 39, 983-1009 (2006)


\bibitem{CS} L.Chekhov and M.Shapiro, Teichmuller spaces of Riemann surfaces with orbifold points of arbitary order and cluster variables, Int. Math. Res. Notices 2014 (2014), 2746-2772; arXiv:1111.3963[math-ph].

    \bibitem{DL} L. Demonet, Categorification of skew-symmerizable cluster algebras. Algebr Represent Theory, 14, 1087-1162, 2011.
\bibitem{FST}S. Fomin, M. Shapiro, D. Thurston, Cluster algebras and triangulated surfaces. Part I: Cluster
complexes, Acta Math. 201 (2008), 83-146.
\bibitem{FZ} Sergey Fomin and Andrei Zelevinsky, Cluster algebras. I. Foundations, J. Amer. Math. Soc. 15 (2002),
no. 2, 497-529 (electronic).
\bibitem{FZ1} S.Fomin and A.Zelevinsky. Cluster algebras. II. Finite type classification. Invent. Math., 154 (2003), no.1, 63-121.
\bibitem{FZ2}S. Fomin and A. Zelevinsky, Cluster algebras: notes for the CDM-03 conference, Current
developments in mathematics, 2003, Int. Press, Somerville, MA, 2003, pp. 1-34.
\bibitem{FZ3} S.Fomin and A.Zelevinsky. Cluster algebras. IV. Coefficients. Compos. Math. 143(2007), 112-164.
\bibitem{G} A. Gleitz, Quantum affine algebras at roots of unity and generalized cluster algebras,
2014, arXiv:1410.2446 [math.RT].
\bibitem{GSV1}M. Gekhtman, M. Shapiro and A. Vainshtein, Cluster algebras and Weil-Petersson forms, Duke Math. J. 127(2005),291-311.
\bibitem{GSV}M. Gekhtman, M. Shapiro and A. Vainshtein, On the properties of the exchange
graph of a cluster algebra, Math. Res. Lett. 15 (2008), 321-330.
\bibitem{HL} Min Huang and Fang Li,
On Structure of cluster algebras of geometric type I: In view of sub-seeds and seed homomorphisms, arXiv:1509.01050 [Math.RT].
\bibitem{IN} K. Iwaki and T. Nakanishi, Exact WKB analysis and cluster algebras II: simple poles,
orbifold points, and generalized cluster algebras, 2014, arXiv:1409.4641 [math.CA].
\bibitem{NT} Tomoki Nakanishi, Structure of seeds in generalized cluster algebras, Pacific Journal of Mathematics, Vol. 277 (2015), No. 1, 201-218.
\bibitem{NZ}Tomoki Nakanishi and Andrei Zelevinsky, On tropical dualities in cluster algebras, arXiv:1101.3736 [math.RA].
\bibitem{RD}D. Rupel, Greedy bases in rank 2 generalized cluster algebras, 2013, arXiv:1309.2567[math.RA]
\end{thebibliography}
\end{document}